\newtheorem{theorem}{Theorem}[section]
\newtheorem{lemma}[theorem]{Lemma}
\newtheorem{proposition}[theorem]{Proposition}
\def\C{\mathbb C}
\def\R{\mathbb R}
\begin{document}

\title[quasi-invariant processes generated by projections]{Functional description of a class of quasi-invariant determinantal processes}

%
\author{Roman Romanov}
\address{Department of Mathematics and Computer Science, Saint-Petersburg State University, Saint-Petersburg, Russia}
\email{morovom@gmail.com}
%
\begin{abstract}
We give a functional characterization of a class of quasi-invariant determinantal processes corresponding to projection kernels in terms of de Branges spaces of entire functions. 
\end{abstract} 
\subjclass[2010]{60G55, 46E22, 47B32}
\keywords{ Reproducing kernel Hilbert spaces,  determinantal point process, de Branges spaces}
\maketitle

\section{Introduction}

This paper is aimed at the first part of the following question posed by G. Olshanski in \cite{GO-Adv}. 

\medskip

\textit{Given a group $ G $ of homemorphisms and a determinantal measure $ \mathbf P $ on a space of configurations $ \operatorname{Conf}(X) $, how to test whether $ \mathbf P $ is $ G $-quasi-invariant? Is it possible to decide this by comparing the correlation kernels $ K (x, y) $ and $ K (g^{ -1 } x, g^{ -1 } y) $?}

\medskip

The group $ G $, depending on the context, is going to be the group of compactly supported permutations, or diffeomorphisms. We understand the first part of the question as the problem of characterization of quasi-invariant determinantal processes in terms of their correlation kernels. Of special interest are processes for which the corresponding correlation kernel is a projection operator in some space $ L^2 $ with respect to a measure \cite{Go-Gelf,Olsh}. In this situation apparently the only known wide class of quasi-invariant determinantal processes was introduced in \cite{Buf-gibbs}. This class contains many previously studied examples -- the sine-, Bessel- and Airy-processes \cite{Dyson,TW-Airy,TW-Bessel}, gamma process \cite{BO-gamma, GO-Adv}, discrete sine-process \cite{BOO}, discrete Bessel process \cite{BOO,johansson}. The description of the class in \cite{Buf-gibbs} is given in geometric terms -- a process is shown to be invariant if the range of the correlation kernel admits certain division property (see below). Here 
we give a complete functional description of quasi-invariant determinantal processes corresponding to projection kernels from the class under discussion. The main results are theorems \ref{main-inv} and \ref{main}.  

Let $ \mu $ be a Borel measure on $ \R $ supported on a Borel subset $ U \subset \R $. Assume that $ \mu $ either has no point masses, or has support discrete in $ \R $. We will refer to these cases as continuous/discrete respectively. Let $ H $ be a reproducing kernel Hilbert space of functions on $ U $ isometrically embedded in $ L^2 ( \R , d\mu ) $, having a real reproducing kernel, and non-degenerate in the sense that for any $ z \in U $ there exists an $ f \in H $ such that $ f(z) \ne 0 $. In the continuous case assume additionally that the corresponding reproducing kernel is locally trace (the definitions are collected in subsection 1.1).

The key definition is as follows. A reproducing kernel Hilbert space $ X $ of functions on a set $ V \subset \R $ satisfies the \textit{division property} if for any $ f \in X $ and any $ k \in V $ such that $ f(k)=0 $ there exists a unique function $ g\in X $ such that 
\[ f(t) = (t-k ) g (t) , \; t \in V . \]

\begin{theorem}\label{main-inv}
Let the space $ H $ satisfy the division property. Then the reproducing kernel $ K $ of the space $ H $ can be represented in the form 
\begin{eqnarray} K ( x, y ) = \Phi ( x ) K_{\mathcal H } (x , y ) \Phi ( y ) , \; x, y \in U \label{K} \end{eqnarray}
where $ \Phi $ is a nonzero function on $ U $, and $ K_{\mathcal H } $ is the reproducing kernel of the de Branges space $ \mathcal H ( E ) $ corresponding to an Hermite--Biehler function $ E $ not vanishing on the real axis. If de Branges spaces $ \mathcal H ( E_1 ) $ and $ \mathcal H ( E_2 ) $ correspond to the same kernel $ K $, then $ K_{\mathcal H_1 } (x , y ) = W(y) K_{\mathcal H_2 } (x , y ) W( x ) $, $ x, y \in \mathbb R $, for some real entire function $ W $ without zeroes.
\end{theorem}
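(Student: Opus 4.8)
The plan is to derive both assertions from the structure theory of de Branges spaces, treating the division property as the real-axis manifestation of the de Branges axiom that permits dividing out zeros. For the representation \eqref{K} I would first pass from $H$, a space of functions on $U\subset\R$, to a space of \emph{entire} functions by a gauge transformation $f\mapsto f/\Phi$ with a fixed nowhere-vanishing multiplier $\Phi$ on $U$. By non-degeneracy one has $K(x,x)>0$ for every $x\in U$, and $\Phi$ is to be manufactured from the kernel (e.g. from a section $K(\cdot,x_0)$ normalized by $\sqrt{K(x,x)}$) so that the reduced kernel $K_{\mathcal H}(x,y):=K(x,y)/(\Phi(x)\Phi(y))$ is a candidate de Branges kernel. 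The substantive step is to show, iterating the division property, that the reduced functions extend to entire functions and that $(x-y)K_{\mathcal H}(x,y)$ factors as $A(x)B(y)-B(x)A(y)$ with $A,B$ real entire; once this splitting is in hand one sets $E=A-iB$, verifies the Hermite--Biehler condition, and identifies the gauged space with $\mathcal H(E)$ by de Branges' axiomatic characterization. That $E$ has no real zeros follows from non-degeneracy, since $K_{\mathcal H}(x,x)>0$ forces $E(x)\neq0$ for real $x$. I expect this entire-extension-plus-factorization to be the main obstacle; the rest is bookkeeping.

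For the final assertion, suppose $\mathcal H(E_1)$ and $\mathcal H(E_2)$ both realize the same $K$ through \eqref{K}, with multipliers $\Phi_1,\Phi_2$. Each $\mathcal H(E_i)$ is isometrically isomorphic to $H$ via $f\mapsto f/\Phi_i$, so the composite $\mathcal H(E_2)\to\mathcal H(E_1)$ is the isometric isomorphism given by multiplication by $W:=\Phi_2/\Phi_1$, a nowhere-vanishing function on $U$. For such a unitary multiplication operator $M_W$ the reproducing kernels transform as
\[ K_{\mathcal H_1}(x,y)=W(x)\,\overline{W(y)}\,K_{\mathcal H_2}(x,y), \]
which I would obtain from the identity $M_W^{*}K^{(1)}_y=\overline{W(y)}\,K^{(2)}_y$ together with $M_WM_W^{*}=I$. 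This is exactly the asserted relation once $W$ is shown to be a real entire function without zeros.

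It remains to promote $W$, a priori defined only on $U$, to such a function. Choosing any $g\in\mathcal H(E_2)$ with $g\not\equiv0$, the function $Wg\in\mathcal H(E_1)$ is entire, so $W=(Wg)/g$ is meromorphic on $\C$. Because $E_2$ has no real zeros, one has $K_{\mathcal H_2}(w,w)>0$ for \emph{every} $w\in\C$; hence for each $z_0$ there is some $g\in\mathcal H(E_2)$ with $g(z_0)\neq0$, showing that $W$ has no pole at $z_0$ and is therefore entire. Applying the same reasoning to $1/W=\Phi_1/\Phi_2$, which implements the inverse isomorphism, proves that $1/W$ is entire as well, so $W$ is zero-free. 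Finally $W$ is real: since $K$ and the de Branges kernels are real, the multipliers $\Phi_1,\Phi_2$ may be taken real, so $W$ is real on $U$, and comparing the displayed identity with its reflection $z\mapsto\overline{W(\bar z)}$ (dividing where $K_{\mathcal H_2}\neq0$) forces $W^{*}=W$ globally. Thus $W$ is a real zero-free entire function and the displayed relation holds as an identity of entire functions, as claimed. The one point needing care is the consistency of the meromorphic extension and the reality argument in the discrete case, where $U$ is not a uniqueness set; both are resolved by arguing with the kernel identity rather than with values on $U$ alone.
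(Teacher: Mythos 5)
The first half of your proposal has a genuine gap: you correctly locate the crux --- showing that $f/\Phi$ extends to an entire function and that the reduced kernel has the integrable form --- but ``iterating the division property'' supplies no mechanism for analytic continuation off $U$. The division property is an algebraic statement about values on $U$ only; when $U$ is discrete (e.g.\ $U=\mathbb Z$ for the discrete sine process) no amount of iteration produces values of $f/\Phi$ at points outside $U$, let alone at non-real points, and even for open $U$ it gives no control of analyticity. The paper's mechanism is operator-theoretic: the division property together with the isometric embedding and the local trace condition \eqref{tra} is used to prove that multiplication by the independent variable is a regular closed symmetric operator with deficiency indices $(1,1)$; Krein's model then yields the meromorphic continuation through the explicit formula $f_\xi(\lambda)=\langle f,\varphi(\overline\lambda)\rangle/\langle\xi,\varphi(\overline\lambda)\rangle$ with $\varphi(z)=(\tilde A-\overline w)(\tilde A-z)^{-1}\xi$, after which multiplication by the canonical product over the pole set $S$ and the $\Omega$-symmetrization produce the de Branges space. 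Your candidate multiplier, a normalized kernel section $K(\cdot,x_0)$, also fails in general: kernel sections vanish at points of $U$ (the sine kernel $K(\cdot,0)$ vanishes at every nonzero integer), whereas the paper builds $\Phi$ from a deficiency element $\xi\in\operatorname{Ran}(A-w)^\perp$, $w\notin\R$, which lemma \ref{offR} shows is zero-free on $U$. Finally, the factorization $(x-y)K=A(x)B(y)-B(x)A(y)$ is itself a quoted theorem from \cite{BLMS}, and the reality of $A,B$ requires the separate symmetrization step; neither is bookkeeping.

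Your uniqueness argument is close in spirit to the paper's, which divides the two kernel identities directly to exhibit $\Phi_1/\Phi_2$ as a ratio of kernel sections, hence meromorphic, and then rules out poles by varying $y$; the unitary-multiplication-operator viewpoint is a reasonable alternative. But the difficulty you flag at the end --- well-definedness of the meromorphic extension of $W$ when $U$ is discrete, i.e.\ why $(Tg_1)/g_1$ and $(Tg_2)/g_2$ agree off $U$ --- is not resolved by the phrase ``arguing with the kernel identity''; that computation has to be carried out, and doing so is essentially the paper's argument.
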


The motivation of this theorem comes from the following result. Recall  \cite{ST} that under the assumptions made $ K ( x, y ) $ is a kernel of some determinantal process. 

\begin{theorem}\cite[Theorems 1.4 and 1.6]{Buf-gibbs}\label{Bu-in}
Let $ H $ satisfy the division property, and additionally

(continuous case): $ U $ be open, $ K ( x, y ) $ be a sufficiently smooth function, and 
\[ \int_U \frac{ K ( x, x) }{ 1+x^2 } d\mu ( x) < +\infty . \]

(discrete case): $ \mu $ be the counting measure, $ \sum_{ t \in U } ( 1+t^2 )^{ -1 } $ be finite, and the space $ H $ contain no compactly supported functions. 

Then the determinantal process corresponding to the kernel $ K $ is quasi-invariant.
\end{theorem}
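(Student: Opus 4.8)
The plan is to realize the pushforward of $\mathbf P$ under a group element as another determinantal measure and to prove the two are mutually absolutely continuous by tracking how the range $H$ of the projection $K$ is transported. Write $\mathbf P=\mathbf P_\Pi$, with $\Pi$ the orthogonal projection in $L^2(\mu)$ onto $H$. A compactly supported diffeomorphism $g$ of $U$ (continuous case) induces a unitary $U_g\colon L^2(\mu)\to L^2(g_*\mu)$, $f\mapsto (f\circ g^{-1})(d\mu/d(g_*\mu))^{1/2}$; identifying $L^2(g_*\mu)$ with $L^2(\mu)$ realizes $g_*\mathbf P$ as the determinantal measure whose range is $U_gH$. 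A compactly supported permutation $\sigma$ (discrete case) induces the analogous unitary that relabels finitely many coordinates. In both cases the theorem reduces to showing that the projection determinantal measures $\mathbf P_\Pi$ and $\mathbf P_{\Pi_g}$, with $\Pi_g$ the projection onto the transported range, are equivalent, together with an explicit formula for the density $d\mathbf P_{\Pi_g}/d\mathbf P_\Pi$.

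The division property is what turns this geometric reduction into a tractable operator problem, and it is here that I would concentrate the work. Because the set of $g$ with $g_*\mathbf P\sim\mathbf P$ is a subgroup, by the cocycle identity for Radon--Nikodym derivatives it suffices to treat elementary moves: products of transpositions in the discrete case, and diffeomorphisms joined to the identity by a flow in the continuous case, so that one may work infinitesimally with a single ``move a particle from $p$ to $q$'' operation. The division property says precisely that, for any finite $P\subset U$, the subspace of functions in $H$ vanishing on $P$ factors as $\big(\prod_{p\in P}(t-p)\big)H_P'$; equivalently, by Theorem \ref{main-inv} one may pass to the de Branges realization $\mathcal H(E)$ and use its integrable reproducing kernel. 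This factorization of the conditional (Palm) subspaces is the structural input: it shows that moving a marked point from $p$ to $q$ transports the range by multiplication by the fixed function $(t-q)/(t-p)$ modulo a finite-rank discrepancy, so that $\Pi_g-\Pi$ is a controlled, explicitly computable perturbation rather than an arbitrary one.

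With this in hand the density should be a regularized multiplicative functional. Heuristically $d\mathbf P_{\Pi_g}/d\mathbf P_\Pi(\mathcal X)=\prod_{x\in\mathcal X}\psi_g(x)$ with $\psi_g$ the Jacobian $(g^{-1})'$ in the continuous case (a finite product in the discrete case), but since the functions in $H$ are globally analytic the modification of the range is global even for compactly supported $g$, so the product must be normalized by a deterministic factor and interpreted as a ratio of Hilbert--Schmidt regularized determinants $\det\nolimits_2(1+\cdots)$. I would verify that the perturbation entering this determinant is Hilbert--Schmidt, so the determinant is defined, and that the normalized product converges $\mathbf P$-a.s.; this is exactly where the integrability hypotheses $\int_U K(x,x)(1+x^2)^{-1}\,d\mu(x)<\infty$ and $\sum_{t\in U}(1+t^2)^{-1}<\infty$ are consumed, as $K(x,x)$ is the one-point intensity and these weighted sums control the global tail of the perturbation against the decay of $\log\psi_g$. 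Finiteness and positivity of the limit, i.e. mutual (not merely one-sided) absolute continuity, then follow by symmetry upon running the argument for $g^{-1}$ as well.

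The genuinely delicate step, which I expect to be the main obstacle, is the pair of analytic estimates: showing the transported projection agrees with multiplication by $(t-q)/(t-p)$ up to a perturbation in the correct operator ideal, and proving the Hilbert--Schmidt bound and convergence of the regularized determinant under the stated weighted integrability; everything else is bookkeeping and the cocycle calculus. In the discrete case the hypothesis that $H$ contains no compactly supported functions enters precisely here: it guarantees that dividing out a zero never yields a function supported away from the move, so the finite-rank discrepancy above stays controllable and the operators involved are injective. Without it the process could carry an independent finite piece that no permutation rearranges absolutely continuously, and quasi-invariance would fail.
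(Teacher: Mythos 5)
You are comparing against a proof that does not exist in this paper: Theorem \ref{Bu-in} is imported verbatim from \cite[Theorems 1.4 and 1.6]{Buf-gibbs} and appears here only as motivation for Theorems \ref{main-inv} and \ref{main}, so the only meaningful benchmark is Bufetov's original argument. Measured against that, your outline reproduces its architecture faithfully: realizing $g_*\mathbf P$ as the determinantal measure of the transported range $U_g H$, reducing via the cocycle identity to elementary moves, using the division property to compare the Palm subspaces $\{f\in H\colon f(p)=0\}$ and $\{f\in H\colon f(q)=0\}$ through multiplication by $(t-q)/(t-p)$, and expressing the Radon--Nikodym density as a \emph{normalized} (regularized) multiplicative functional whose convergence consumes the hypotheses $\int_U K(x,x)(1+x^2)^{-1}d\mu<\infty$, respectively $\sum_{t\in U}(1+t^2)^{-1}<\infty$. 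That is indeed the skeleton of \cite{Buf-gibbs}, where the central mechanism is the equivalence of reduced Palm measures at different points with density the normalized multiplicative functional of $\bigl((x-p)/(x-q)\bigr)^2$.

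The genuine gap is that the skeleton is all you supply. The two claims you yourself flag as ``the main obstacle'' --- that the transported projection is a controlled (Hilbert--Schmidt, or finite-rank-plus-small) perturbation of the multiplication picture, and that the regularized determinant / normalized multiplicative functional exists, converges $\mathbf P$-a.s., and is a.s.\ finite and nonzero --- are not deferred technicalities: they \emph{are} Theorems 1.4 and 1.6 of \cite{Buf-gibbs}, and occupy the bulk of that paper (Palm comparison, martingale convergence of conditional densities, the existence theory for regularized multiplicative functionals). Nothing in your sketch proves even the Hilbert--Schmidt bound, and ``mutual absolute continuity follows by symmetry for $g^{-1}$'' is only valid once a.s.\ positivity of the density is established --- in the discrete case that positivity is precisely where the hypothesis that $H$ contains no compactly supported functions is consumed, not the ``injectivity bookkeeping'' you describe. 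Two further defects: invoking Theorem \ref{main-inv} inside a proof of Theorem \ref{Bu-in} inverts the logical order (Theorem \ref{main-inv} is proved in this paper \emph{after}, and independently of, the quoted result; Bufetov works directly from the division property and needs no de Branges realization), and your infinitesimal ``flow'' reduction in the continuous case is not how the diffeomorphism case is actually handled --- there the multiplicative functional for $g$ is estimated directly, with the Palm-move argument reserved for the discrete group. So: right roadmap, but as a proof it is a program whose every substantive step remains open.
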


Precise smoothness assumptions on $ K $ in the continuous case are not specified in \cite{Buf-gibbs} but $ C^2 ( U \times U ) $ will suffice, see also \cite{Bu-Dy-Os}. For such kernels theorem \ref{main-inv} becomes a characterization, in the sense that the division property is satisified for spaces $ H $ with the reproducing kernels of the form \eqref{K} with $ \Phi \in C^2 ( U ) $, $ U $ open, subject to the rest of the assumptions in the continuous case of theorem \ref{Bu-in}, and thus such $ K $'s are correlation kernels of quasi-invariant determinantal processes. We are going to formulate the precise assertion elsewhere, for the actual regularity requirements which ensure the conclusion of theorem \ref{Bu-in} are certainly lower than $ C^2 $, while theorems \ref{main-inv} and \ref{main} have no regularity assumptions at all. 

The proof of theorem \ref{main-inv} is based on the characterization of spaces with division property given by the following theorem. Let $ U \subset \mathbb{R}$ be a nonempty Borel set, and $ H $ be a reproducing kernel Hilbert space of functions on $ U $.

\begin{theorem}\label{main}
Let $ H $ be non-degenerate and satisfy the division property. Assume that the space $ H $ is isometrically embedded into the space $ L^2 ( U, d\mu ) $ for some Borel measure $ \mu $ on $ U $, and 
\begin{eqnarray}\label{tra} \int_{I \cap U} K ( t ,t ) d\mu ( t ) < \infty \end{eqnarray} for any bounded interval $ I $.

Then there exists a function $ \Phi\colon U \longrightarrow \mathbb{C} $, $ \Phi ( x ) \ne 0 $ for all $ x\in U $, and an Hermite--Biehler function $ E $ without real zeroes such that for any $ f \in H $ the function $ f/\Phi $ extends uniquely to an entire function, $ X_f $, from the de Branges space $ \mathcal H ( E ) $, and the map $ f \mapsto X_f $ is an isomorphism of $ H $ on $ \mathcal H (E) $. 
\end{theorem}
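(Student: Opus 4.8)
The plan is to realise $H$ as a de Branges space through the spectral theory of the operator of multiplication by the independent variable, using the division property to identify this operator's resolvent. First I would introduce the operator $S$ on $H$ given by $(Sf)(t)=tf(t)$ with domain $\Dom(S)=\{f\in H: tf(t)\in H\}$. The isometric embedding $H\hookrightarrow L^2(U,d\mu)$ together with the reality of $t$ shows at once that $S$ is symmetric, since $\langle Sf,g\rangle_H=\int t f\overline g\,d\mu=\langle f,Sg\rangle_H$. The division property is precisely the statement that a real shift of $S$ is invertible: for $k\in U$ one checks $\operatorname{ran}(S-k)=H_k:=\{f\in H: f(k)=0\}$, and the inverse $(S-k)^{-1}$ is the division map $h\mapsto h/(t-k)$, which is bounded by the closed graph theorem (convergence in a reproducing kernel Hilbert space forces pointwise convergence off $k$). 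Since $H_k=\{K(\cdot,k)\}^{\perp}$ and $K(\cdot,k)\neq0$ by non-degeneracy, $\operatorname{ran}(S-k)$ is closed of codimension one; after passing to the closure I may assume $S$ closed.

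The previous step shows that every $k\in U$ is a point of regular type for $S$ with $\dim\operatorname{ran}(S-k)^{\perp}=1$. I would next invoke the classical facts that the set of points of regular type is open in $\C$, that $z\mapsto\dim\operatorname{ran}(S-z)^{\perp}$ is locally constant there, and that every non-real point is automatically of regular type. A disk around a fixed $k\in U$ then consists of regular points and meets both half-planes, forcing the deficiency to equal $1$ on all of $\C\setminus\R$; hence $S$ has deficiency indices $(1,1)$. That $S$ is densely defined and simple I would deduce from the trace condition \eqref{tra}, which keeps $\Dom(S)$ large, and from the fact that the kernels $\{K(\cdot,k)\}_{k\in U}$ span $H$ and are the deficiency elements, so that no reducing subspace can carry a self-adjoint part.

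With $S$ a simple, densely defined, closed symmetric operator of deficiency indices $(1,1)$, I would apply the canonical identification of such operators with the multiplication operator in a de Branges space $\mathcal H(E)$ (de Branges; Krein's theory of entire operators), obtaining a unitary $V\colon H\to\mathcal H(E)$ that intertwines $S$ with multiplication by $z$. The Hermite--Biehler function $E$ has no real zeros because $K_{\mathcal H}(x,x)>0$ for every real $x$, a positivity transported from $K(x,x)\neq0$. The gauge $\Phi$ then appears automatically: for $x\in U$ both $K(\cdot,x)$ and $V^{-1}K_{\mathcal H}(\cdot,x)$ span the one-dimensional space $\operatorname{ran}(S-x)^{\perp}$, so $V^{-1}K_{\mathcal H}(\cdot,x)=\overline{\Phi(x)}^{-1}K(\cdot,x)$ for a nonzero scalar $\Phi(x)$, whence, by the reproducing property, $(Vf)(x)=\langle f,V^{-1}K_{\mathcal H}(\cdot,x)\rangle_H=f(x)/\Phi(x)$ for all $f\in H$. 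Thus $X_f:=Vf\in\mathcal H(E)$ is an entire function restricting to $f/\Phi$ on $U$; since $V$ is a bijection this is the unique such element of $\mathcal H(E)$, and $f\mapsto X_f$ is the asserted isomorphism.

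The main obstacle is the passage from the real-variable division property to genuine complex analyticity, which in this scheme is concentrated in two places: proving that the closure of $S$ is densely defined and simple, and verifying that the model furnished by the deficiency structure is a bona fide de Branges space with the correct integral norm rather than a merely axiomatic space of entire functions. I expect the trace condition \eqref{tra} to do exactly this work, controlling the local behaviour of $K(t,t)$ so that multiplication by $t$ has dense domain and the resulting entire functions land in $\mathcal H(E)$; this is where the discrete and continuous hypotheses on $\mu$ enter. The simplicity point is the safeguard against degeneracy: were there a self-adjoint reducing part, $H$ would contain a subspace on which the kernels collapse, contradicting non-degeneracy together with the division property holding at every point of $U$.
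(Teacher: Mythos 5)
Your overall strategy coincides with the paper's (multiplication operator, deficiency indices $(1,1)$, de Branges model, gauge $\Phi$ read off from the deficiency elements $K(\cdot,x)$), and several steps are sound: the identification $\operatorname{ran}(S-k)=\{f:f(k)=0\}$ via the division property, the closed-graph argument, the local constancy of the deficiency index, and the derivation of $\Phi$ from the one-dimensionality of $\operatorname{ran}(S-x)^{\perp}$. But there are two genuine gaps. First, your claim that the trace condition forces $\Dom(S)$ to be dense is false: under the hypotheses of the theorem the domain of multiplication can have a one-dimensional orthogonal complement (this already happens for certain de Branges spaces themselves, which satisfy all the hypotheses), and no amount of control on $K(t,t)$ rules it out. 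The paper proves only $\dim\mathcal D^{\perp}\le 1$ and then deliberately runs the whole argument for a possibly non-densely defined symmetric operator (self-adjoint extensions of such operators exist but require the Krasnoselskii/linear-relation machinery, not the standard densely-defined theory). Since the ``canonical identification'' you invoke (Krein's entire operators, or Martin's Theorem 5.2.6, which the paper discusses) is stated for densely defined operators, your black box does not apply as cited.

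Second, you never establish that the points of $\R\setminus U$ are of regular type for $S$. The division property gives regularity only on $U$, and non-real points are free; but to obtain a model in a de Branges space of \emph{entire} functions with $E$ having \emph{no real zeros} you need $\hat\rho(S)=\C$, i.e.\ regularity at every real point. This is precisely the job of the trace condition \eqref{tra} in the paper: if $(S-p)f_n\to0$ with $\|f_n\|=1$ and $p\notin U$, then $f_n\to0$ pointwise on $U$, $|f_n(t)|^2\le K(t,t)$, and dominated convergence plus the isometric embedding force $f_n\to0$, a contradiction. You assign \eqref{tra} to the (unachievable) task of proving density and leave this essential step undone; without it the model functions need not extend analytically across $\R\setminus U$ and $E$ could vanish on the real axis. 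A lesser but real issue is that even granting both points, the representation theorem you cite is non-constructive and assumes $U=\R$; the paper instead builds the de Branges space by hand (Krein's meromorphic model, multiplication by the canonical product over the pole set $S$, the integrable-kernel representation from [BLMS], and an explicit $\Omega$-symmetrization), which is what makes the function $E$ computable in the examples.
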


Some comments on the theorem are to be made.

(i) The division property is trivially satisfied when $ H $ is a de Branges space $ \mathcal H ( E ) $ with the function $ E $ not having real zeroes. It does not depend on the isometric embedding into an $ L^2 ( \mathbb R, d\mu ) $ space and is clearly preserved by the multiplication by a nonzero function with the corresponding change in the measure. The point of theorems \ref{main} and \ref{main-inv} is that, under mild technical assumptions, all the spaces satisfying the division property are coming from de Branges spaces in this way.

(ii) The uniqueness requirement in the division property ensures that for any $ k \in U $ the function $ \delta_k $ equal to 1 at $ t=k $ and $0 $ otherwise does not belong to $ H $.

(iii) The conclusion of the theorem implies that in its assumptions $ U $ is a uniqueness set for the de Branges space $ \mathcal H $.

(iv) The proof is constructive -- we provide a procedure restoring the de Branges space $ \mathcal H ( E ) $ from the correlation kernel. In section 4 we illustrate this by calculating the Hermite--Biehler function $ E $ in several widely studied cases of quasi-invariant determinantal processes.
This is one of the major distinctions of theorem 1.3 from known results. Let us comment on this. The idea that regular simple symmetric operators with deficiency indices $(1,1)$ are modelled by the multiplication operators in de Branges spaces is a part of the folklore in the de Branges spaces/spectral theory community. The precise assertion most relevant in our context appeared in \cite[Theorem 5.2.6]{Martin}. It refers to the case $ U = \mathbb R $, assumes densely defined multiplication, and, instead of the division property, that the multiplication operator is regular simple with the deficiency indices $ (1,1) $ right away. The proof of \cite[Theorem 5.2.6]{Martin} uses the Beurling-Lax description of invariant subspaces of the shift operator which is non-constructive in that it requires knoweldge of the orthogonal complement to the range of a restricted shift operator, which is not easy to calculate.

(v) One of possible applications of theorem \ref{main-inv} is the interpolation between the different quasi-invariant processes.

\subsection{Notations and basics.} For a subset $ M $ of the real line or the complex plane $ \chi_M $ stands for the indicator function of the set $ M $. The notation is going to be abused by using the same symbol $ \chi_M $ for the operator of multiplication by the function $ \chi_M $ in various function spaces. Given a measure $ \mu $ supported on a Borel set $ U \subset \mathbb R $, a kernel $ P ( x, y ) $, $ x , y \in U $, is said to be \textit{locally trace} if the integral operator with the kernel $ \chi_{M\cap U} (x) P (x,y) \chi_{M \cap U } (y) $ in the space $ L^2 ( U , d\mu ) $ belongs to the trace class for any compact subset $ M \subset \R $.

A point process $ \mathbf P $ with points from a set $ U \subset \R $ is called \textit{determinantal} if there exists a Borel measure $ \mu $ on the set $ U $ and a locally trace class operator $ K $ in the space $ L^2 ( U , d\mu ) $ such that for any bounded measurable function $ g $ on $ U $ for which $ g-1 $ vanishes outside the set $ B = U \cap \{ |x| \le R_g \} $ for some $ R_g > 0 $, the following identity holds,
\[ \mathbb E \bigl( \prod_{ x \in X } g ( x ) \bigr) = \det ( I + ( g-1) K \chi_B ) , \]
where $ \mathbb E $ is the expectation with respect to the measure $ \mathbf P $.

A determinantal process is called \textit{quasi-invariant} with respect to the group $ G $ if the measures $ \mathbf P $ and $ \mathbf P \circ g $ are equivalent for any $ g \in G $. 

An entire function $ G $ is called \textit{Hermite-Biehler function} if $ | G ( z ) | > | G ( \overline z ) | $ whenever $ \Im z > 0 $. Let $ E $ be an Hermite--Biehler function. The \textit{de Branges space} $ \mathcal H ( E ) $ \cite{deBr} is the linear set of entire functions $ f $ such that the functions $ f/ E $ and $ f^* / E $, $ f^* ( z ):= \overline{ f ( \overline z )} $, belong to the Hardy class $ H^2_+ $ in the upper half plane, endowed with metric \[ \left\| f \right\|_{ \mathcal H ( E ) }^2 = \int_\R \left| \frac{f(t)}{E(t)} \right|^2 dt . \]

The \textit{deficiency indices} of a (not necessarily densely defined) symmetric operator $ T $ in a Hilbert space are $ n_\pm ( T ) = \dim\operatorname{Ran} ( T \mp i )^\perp $. We refer to \cite{Krasnosel, de Snoo} for basic notions related to the theory of symmetric operators. A symmetric operator with deficiency indices $ (1,1) $ admits a selfadjoint extension. This is standard for densely defined operators, and can be found in \cite{Krasnosel} or \cite[Corollary 1.7.13]{de Snoo} in the general case, with a very similar proof via Cayley transform. 

The reproducing kernel $ K ( x , y ) $ is sometimes denoted $ K_x ( y ) $ to emphasize the point at which the evaluation takes place.

\medskip 

\textbf{Remark.} In our previous paper \cite{BLMS} the division property of this text was called the weak division property. Since no other division property is going to be used here, we drop the weak from the definition.

\subsection{Outline of the argument.}

The proof of theorem \ref{main} proceeds as follows. First we establish that the operator of multiplication by the independent variable in $ H $ is regular with deficiency indices $ (1,1) $. Then we use a model, due to M. Krein \cite{GG}, of symmetric operators with deficiency indices $(1,1)$ in a Hilbert space of meromorphic functions. When applied to the multiplication operator, the passage to the model is realized by the division by an element of the deficiency subspace. After multiplication by the canonical product over the set of poles, this space turns into a reproducing kernel Hilbert space $ \mathcal X $ of entire functions. Now an axiomatic characterization of the de Branges spaces \cite{deBr} is used. The characterization is as follows.

\medskip 

\textit{Let $ \mathfrak H $ be a reproducing kernel Hilbert space of entire functions such that 
\\
\phantom{oo}(a) for any $ f \in \mathfrak H $ the function $ f^* \in \mathfrak H $, and $ \left\| f \right\| = \left\| f^* \right\| $;
\\
\phantom{oo}(b) for any $ f \in \mathfrak H $ and any $ a\in \C $ such that $ f(a) = 0 $ the function $ f /( \cdot - a ) \in \mathfrak H $, and $ \| f (\cdot - \overline a ) /( \cdot - a ) \| = \| f \| $. 
\\
\phantom{oo}Then there exists an Hermite-Biehler function $ E $ having no real zeroes such that $ \mathfrak H = \mathcal H ( E ) $.}

\medskip

The space $ \mathcal X $ is shown to have property (b). In general, it does not have the property (a), but a symmetrization via multiplication by an appropriate zero-free entire function turns it into the one having both properties, and thus a de Branges space. The possibility of such a symmetrization is actually an abstract fact of the de Branges spaces theory \cite[Problem 54]{deBr}. We provide the required argument, first, to make the overall proof of the theorem constructive and closed, and, second, because the problems in \cite{deBr} are given without solutions.

\section{Proof of Theorem \ref{main}}

Let $ \mathcal D = \{ f \in H \colon tf \in H \} $, and let $ A $ be the operator in $ H $ defined on $ \mathcal D $ by $ (Af)(k) = k f(k) $. The assumptions of theorem \ref{main} imply that $ A $ is a closed symmetric operator in $ H $, with closedness following from the reproducing kernel property. The linear set $ \mathcal D $ may or may not be dense in $ H $, however 

\begin{lemma}
$ \operatorname{dim} \mathcal D^\perp \le 1 $.
\end{lemma}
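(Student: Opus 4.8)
The plan is to fix an arbitrary point $k_0\in U$ and analyze the evaluation functional $\Lambda\colon\mathcal D^\perp\to\mathbb C$, $\Lambda(g)=g(k_0)=\langle g,K_{k_0}\rangle$, on the space $\mathcal D^\perp$. Throughout I use the isometric embedding $H\hookrightarrow L^2(U,d\mu)$, so that $\langle u,v\rangle_H=\int_U u\bar v\,d\mu$ for $u,v\in H$, together with the reproducing property. Non-degeneracy guarantees $K_{k_0}\neq0$, so that $H_{k_0}:=\{f\in H:f(k_0)=0\}=\{K_{k_0}\}^{\perp}$ is a closed subspace of codimension one.

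First I would record two elementary consequences of the division property. For $f\in H_{k_0}$ the function $f/(\cdot-k_0)$ lies in $H$ by the division property, and since $t\,f(t)/(t-k_0)=f(t)+k_0\,f(t)/(t-k_0)\in H$ it in fact lies in $\mathcal D$; this produces a large supply of test elements of $\mathcal D$. Second, $K_{k_0}\in\mathcal D$ if and only if $(\cdot-k_0)K_{k_0}\in H$, because $(t-k_0)K_{k_0}(t)=tK_{k_0}(t)-k_0K_{k_0}(t)$. Write $v_{k_0}:=(\cdot-k_0)K_{k_0}$.

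The key step is to show that $\ker\Lambda=\{g\in\mathcal D^\perp:g(k_0)=0\}$ is contained in the single line $\mathbb C v_{k_0}$. Indeed, for such a $g$ the division property gives $g_1:=g/(\cdot-k_0)\in H$, and testing $g\perp\mathcal D$ against the elements $f/(\cdot-k_0)\in\mathcal D$ with $f\in H_{k_0}$ yields $0=\int_U \frac{g}{t-k_0}\,\bar f\,d\mu=\langle g_1,f\rangle$ for all $f\in H_{k_0}$, the denominator being real and $g_1$ lying in $H\subset L^2$ precisely because $g(k_0)=0$. Hence $g_1\perp H_{k_0}$, so $g_1\in H_{k_0}^{\perp}=\mathbb C K_{k_0}$ and $g=(\cdot-k_0)g_1\in\mathbb C v_{k_0}$, as claimed.

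Finally I would split into two cases according to whether $K_{k_0}\in\mathcal D$. If $K_{k_0}\notin\mathcal D$, then $v_{k_0}\notin H$ by the second observation, so $\mathbb C v_{k_0}\cap H=\{0\}$ and the inclusion above forces $\ker\Lambda=\{0\}$; thus $\Lambda$ is injective and $\dim\mathcal D^\perp\le1$. If instead $K_{k_0}\in\mathcal D$, then every $g\in\mathcal D^\perp$ satisfies $g(k_0)=\langle g,K_{k_0}\rangle=0$, so $\mathcal D^\perp=\ker\Lambda\subseteq\mathbb C v_{k_0}$ and again $\dim\mathcal D^\perp\le1$. I expect the main subtlety to be exactly this dichotomy: the two observations by themselves give only $\dim\mathcal D^\perp\le\dim\ker\Lambda+\dim\operatorname{Ran}\Lambda\le2$, and the real point is that a nonzero kernel can occur only when $K_{k_0}\in\mathcal D$, which \emph{simultaneously} collapses the range of $\Lambda$ to zero. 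Some care is also needed to keep all pairings genuine $L^2$ inner products, which is why the test functions are arranged so that the relevant quotients vanish at $k_0$ and hence remain in $H$.
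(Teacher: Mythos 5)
Your proof is correct, and its engine is the same as the paper's: pairing a $g\in\mathcal D^{\perp}$ that vanishes at a point $a$ against the test functions $f/(\cdot-a)\in\mathcal D$ with $f(a)=0$, and using the isometric embedding to move the real factor $(t-a)$ across the inner product, so that $g/(\cdot-a)\perp H_a$ and hence $g/(\cdot-a)\in\mathbb C K_a$. Where you genuinely diverge is in how this computation is exploited. The paper argues by contradiction: if $\dim\mathcal D^{\perp}\ge 2$, then for \emph{every} $a\in U$ some nonzero element of $\mathcal D^{\perp}$ vanishes at $a$, so the computation forces $K_a\in\mathcal D$ for every $a$; since the kernels $K_a$ span a dense subspace of $H$, $\mathcal D$ is dense, contradicting $\mathcal D^{\perp}\ne\{0\}$. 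You instead stay at a single point $k_0$ and give a direct proof through the evaluation functional $\Lambda$ on $\mathcal D^{\perp}$, observing the dichotomy that $K_{k_0}\notin\mathcal D$ kills $\ker\Lambda$ (since then $v_{k_0}=(\cdot-k_0)K_{k_0}\notin H$), while $K_{k_0}\in\mathcal D$ kills $\operatorname{Ran}\Lambda$ and leaves $\ker\Lambda\subseteq\mathbb C v_{k_0}$. Your route is purely local and avoids any appeal to the density of $\operatorname{span}\{K_a\}_{a\in U}$; it also isolates the precise obstruction to $\mathcal D$ being dense (namely $v_{k_0}\in H$). The paper's route is shorter but global and indirect. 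Both arguments are complete; your handling of the $L^2$ pairings at the atom $k_0$ in the discrete case is the right precaution and matches what the paper implicitly relies on.
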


\begin{proof}
Assume by contradiction that $ \operatorname{dim} \mathcal D^\perp \ge 2 $. Then for any $ a \in U $ there exists a nonzero $ f \in H $, $ f \perp \mathcal D $, such that $ f ( a) = 0 $. By the division property $ f = ( \cdot - a ) h $ for some $ h \in \mathcal D $. 
Also, for any $ g \in H $ such that $ g ( a ) = 0 $ we would have $ g / ( \cdot - a ) \in \mathcal D $, hence
\[ 0 = \left\langle f , \frac g{ \cdot - a } \right\rangle = \left\langle ( \cdot - a ) h , \frac g{ \cdot - a } \right\rangle = \langle h , g \rangle . \]
It follows that $ h =(\textrm{const}) K_a $. Thus $ K_a \in \mathcal D $. Since $ a \in U $ is arbitrary this means that $ \mathcal D $ contains all $ K_a $'s, $ a \in U $, and is thus dense, a contradiction.
\end{proof}

A symmetric operator, $ L $, is called \textit{regular} if $ \hat\rho ( L ) = \mathbb C $.

\begin{lemma}\label{11}
$ A $ is regular, and has the deficiency indices $ n_\pm ( A ) = 1 $.
\end{lemma}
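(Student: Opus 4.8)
The plan is to show that every point of $ \C $ is a point of regular type, i.e. $ \hat\rho ( A ) = \C $, and then to pin down the defect number at one convenient point. The non-real points are immediate: for $ \Im z \ne 0 $ and $ f \in \mathcal D $ the isometric embedding into $ L^2 ( U, d\mu ) $ gives $ \| ( A - z ) f \|_H^2 = \int_U | t - z |^2 | f |^2 \, d\mu \ge ( \Im z )^2 \| f \|_H^2 $, so $ A - z $ is bounded below; the same pointwise bound settles every real $ z $ with $ \operatorname{dist} ( z , U ) > 0 $. Thus the only real points requiring genuine work are those in $ \overline U $, and among these the delicate ones are the boundary points $ z \in \overline U \setminus U $, where $ | t - z | $ is not bounded away from $ 0 $ on $ U $; this is the main obstacle.

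For a point $ a \in U $ I would compute the range of $ A - a $ exactly. If $ h \in \mathcal D $ then $ ( t - a ) h \in H $ vanishes at $ a $, so $ \operatorname{Ran} ( A - a ) \subseteq K_a^\perp $; conversely, if $ f \in H $ with $ f ( a ) = 0 $ the division property produces $ h \in H $ with $ f = ( t - a ) h $, and $ th = f + a h \in H $ shows $ h \in \mathcal D $, whence $ \operatorname{Ran} ( A - a ) = K_a^\perp $. This range is closed of codimension one, since $ K_a \ne 0 $ by non-degeneracy. Injectivity of $ A - a $ follows because $ ( t - a ) h = 0 $ forces $ h $ to be supported on $ \{ a \} $: in the continuous case this set is $ \mu $-null, and in the discrete case remark (ii) excludes $ \delta_a \in H $, so $ h = 0 $ either way. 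Hence $ a $ is a point of regular type with defect number $ 1 $.

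To treat a real $ z \notin U $ (in particular the boundary points) I would argue by contradiction using hypothesis \eqref{tra}. Writing $ K ( t , t ) = \| K_t \|_H^2 $, a basis computation shows that $ \int_{ I \cap U } K ( t , t ) \, d\mu ( t ) < \infty $ says precisely that the restriction $ \chi_I \colon H \to L^2 ( I \cap U , d\mu ) $ is Hilbert--Schmidt, hence compact, for every bounded interval $ I $. Suppose $ A - z $ were not bounded below: choose $ f_n \in \mathcal D $ with $ \| f_n \|_H = 1 $ and $ \| ( t - z ) f_n \|_H \to 0 $, and pass to a weakly convergent subsequence $ f_n \rightharpoonup f $. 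Compactness of $ \chi_I $ upgrades this to $ f_n \to f $ in $ L^2 ( I \cap U ) $, while multiplying by the bounded factor $ ( t - z ) $ and using $ ( t - z ) f_n \to 0 $ in $ L^2 $ forces $ ( t - z ) f = 0 $ on $ I \cap U $; since $ z \notin U $ this gives $ f = 0 $ a.e. on $ U $, so $ \| f_n \|_{ L^2 ( I \cap U ) } \to 0 $ for every $ I $. As the embedding is isometric, $ \| f_n \|_{ L^2 ( U ) } = 1 $, so the unit mass escapes every bounded interval: $ \| f_n \|_{ L^2 ( U \setminus I ) } \to 1 $. Choosing $ I = [ z - R , z + R ] $ yields $ \| ( t - z ) f_n \|_{ L^2 ( U ) } \ge R \, \| f_n \|_{ L^2 ( U \setminus I ) } \to R $, contradicting $ \| ( t - z ) f_n \|_H \to 0 $. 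Hence $ A - z $ is bounded below, completing the proof that $ \hat\rho ( A ) = \C $ and that $ A $ is regular.

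Finally, for the deficiency indices I would invoke the standard fact that the defect number $ z \mapsto \dim \operatorname{Ran} ( A - z )^\perp $ is locally constant, hence constant on each connected component of the field of regularity \cite{Krasnosel, de Snoo}. Since $ A $ is regular this field is all of $ \C $, which is connected, so the defect number is globally constant; its value at $ z = a $ was shown above to be $ 1 $. Therefore $ n_\pm ( A ) = \dim \operatorname{Ran} ( A \mp i )^\perp = 1 $.
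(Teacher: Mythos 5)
Your proof is correct and follows essentially the same route as the paper: the division property gives $U\subset\hat\rho(A)$ with $\operatorname{Ran}(A-a)=K_a^{\perp}$ of codimension one, the points of $\mathbb R\setminus U$ are handled by the same contradiction argument resting on \eqref{tra} (your Hilbert--Schmidt/compactness step for $\chi_I\colon H\to L^2(I\cap U,d\mu)$ is just a repackaging of the paper's pointwise-bound-plus-dominated-convergence step, both computing $\sum_j|e_j(t)|^2=K(t,t)$ in effect), and the constancy of the defect number on the connected field of regularity yields $n_\pm(A)=1$. The only cosmetic differences are that you treat the nonreal points explicitly via the $L^2$ lower bound, where the paper relies on symmetry, and that your injectivity argument should lean on remark (ii) in all cases rather than on $\mu\{a\}=0$, since Theorem \ref{main} does not assume the continuous/discrete dichotomy.
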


\begin{proof}
By the division property for any $ \lambda \in U $ the range of $ A -\lambda $ is $ \{ f \in H \colon f(\lambda ) = 0 \} $ which is a subspace in $ H $ of codimension $ 1 $ on account of the non-degeneracy assumption. Then, $ \ker ( A - \lambda ) = \{ 0 \} $ because $ \delta_\lambda \notin H $, see (i) above. Thus, $ U \subset \hat\rho (A) $ and $ n_\pm ( A ) = 1 $. 


It remains to show that $ \mathbb R \setminus U \subset  \hat \rho ( A ) $. Assume by contradiction that there exists a $ p \in \mathbb R \setminus U $ such that $ ( A - p ) f_n \longrightarrow 0 $ for some sequence $ f_n \in\mathcal D $ such that $ \| f_n \| = 1 $. Clearly, $ f_n $ converge to $ 0 $ in $ L^2 $ with respect to the measure $ \mu $ in the complement of any vicnity of $ p $. By the reproducing kernel property $ (t-p) f_n ( t ) \longrightarrow 0 $, and therefore $ f_n ( t ) \longrightarrow 0 $, for all $ t \in U $. On the other hand, $ \left|f_n ( t ) \right|^2 = \left| \langle f, K_t \rangle \right|^2 \le K(t,t) $ by the Schwarz inequality and therefore $ f_n $ converges to $ 0 $ locally in $ L^2 $ with respect to the measure $ \mu $ by the dominated convergence theorem. At this point we used assumption \eqref{tra}. Combined, these imply that $ f_n \longrightarrow 0 $ in $ H $, a contradiction. 
\end{proof}


A corollary of lemma \ref{11} is that $ A $ is simple and any selfadjoint extension of it has discrete spectrum, see e. g. \cite[Corollary 3.1, Chapter II]{GG} for the case when $ \mathcal D $ is dense in $ H $. A proof of this fact is given in lemma \ref{regular} at the end of this section. 

According to the following lemma nonzero elements of the deficiency subspaces of the operator $ A $ off the real line have no zeroes. 

\begin{lemma}\label{offR} $ \operatorname{Ran} ( A - w )^\perp \cap \operatorname{Ran} ( A - t ) = \{ 0 \} $ for any $ w \notin \mathbb R $, $ t \in \mathbb R $. In particular, for any nonzero $ \xi \in \operatorname{Ran} ( A - w )^\perp $ we have $ \xi ( x ) \ne 0 $ for all $ x\in U $.  
\end{lemma}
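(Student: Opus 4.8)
The plan is to prove the intersection statement first and then obtain the zero-free property as an immediate corollary. I would fix $ w \notin \mathbb{R} $ and $ t \in \mathbb{R} $ and suppose $ \xi $ lies in both $ \operatorname{Ran}(A - w)^\perp $ and $ \operatorname{Ran}(A - t) $. Since $ \xi \in \operatorname{Ran}(A - t) $, I may write $ \xi = (A - t)g $ with $ g \in \mathcal{D} $. The strategy is to exploit the orthogonality of $ \xi $ to the whole range of $ A - w $ by pairing $ \xi $ against the single well-chosen element $ (A - w)g $, and then to extract a reality obstruction from the fact that $ t $ is real while $ w $ is not.

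The first step is the pairing itself. Writing $ (A - w)g = (A - t)g + (t - w)g = \xi + (t - w)g $ and using $ \xi \perp \operatorname{Ran}(A - w) $, one obtains
\[ 0 = \langle \xi, (A - w)g\rangle = \|\xi\|^2 + (t - \bar w)\langle \xi, g\rangle , \]
where the conjugate $ \bar w $ appears because $ t $ is real. The second step is to note that $ \langle \xi, g\rangle = \langle (A - t)g, g\rangle = \langle Ag, g\rangle - t\|g\|^2 $ is \emph{real}, since $ A $ is symmetric (so $ \langle Ag, g\rangle \in \mathbb{R} $) and $ t $ is real. Combining the two relations gives $ \|\xi\|^2 = (\bar w - t)\langle \xi, g\rangle $, in which the left-hand side is real and nonnegative while on the right a real scalar $ \langle \xi, g\rangle $ is multiplied by the genuinely non-real number $ \bar w - t $ (non-real precisely because $ w \notin \mathbb{R} $). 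This forces $ \langle \xi, g\rangle = 0 $, hence $ \|\xi\| = 0 $ and $ \xi = 0 $, which is the triviality of the intersection.

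For the second assertion, let $ \xi \in \operatorname{Ran}(A - w)^\perp $ be nonzero and suppose $ \xi(x) = 0 $ for some $ x \in U $. By the division property, as recorded in the proof of lemma \ref{11}, $ \operatorname{Ran}(A - x) = \{ f \in H : f(x) = 0 \} $, so $ \xi \in \operatorname{Ran}(A - x) $. Since $ x \in U \subset \mathbb{R} $, the intersection statement just proved, applied with $ t = x $, yields $ \xi = 0 $, a contradiction. Therefore $ \xi(x) \ne 0 $ for every $ x \in U $.

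The argument is short, and what I would flag as the main point — rather than a genuine obstacle — is the recognition that a single test vector, namely $ (A - w)g $ built from the very same $ g $ that produces $ \xi $, already encodes the contradiction, together with the observation that the reality of $ \langle \xi, g\rangle $ forced by the symmetry of $ A $ is incompatible with the nonzero imaginary part of $ w $. In particular no appeal to the explicit dimensions of the deficiency subspaces, nor to any self-adjoint extension of $ A $, is required.
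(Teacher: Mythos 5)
Your proof is correct and follows essentially the same route as the paper: the paper likewise writes $\xi=(A-a)\eta$, uses orthogonality to $\operatorname{Ran}(A-w)$ to get $\|\xi\|^2=(w-a)\langle\eta,\xi\rangle$, and takes the imaginary part using the reality of $\langle\eta,(A-a)\eta\rangle$ forced by the symmetry of $A$. Your explicit derivation of the zero-free statement from the division property is a welcome spelling-out of what the paper leaves implicit.
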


\begin{proof}
Let $ \xi \in \operatorname{Ran} ( A - w )^\perp \cap \operatorname{Ran} ( A - a ) $ for some $ a \in \mathbb R $. Then there exists an $ \eta \in \mathcal D $ such that $ \xi = ( A - a ) \eta $, and thus
\begin{equation}\label{xi}
\| \xi \|^2 = \langle ( A - a ) \eta , \xi \rangle =  ( w-a) \langle \eta, \xi \rangle .
\end{equation}  On taking the imaginary part we get $(\Im w ) \langle \eta , (A-a) \eta \rangle = 0 $. Since $ w $ is nonreal it means that $ \langle \eta , (A-a) \eta \rangle = 0 $. Plugging this back into \eqref{xi} we obtain $ \xi = 0 $, as required.  
\end{proof}

From now on we fix an arbitrary $ w \notin \R $ and $ \xi \in \operatorname{Ran} ( A - w )^\perp $. 

Consider the map $ f \mapsto f_\xi $ where $ f_\xi $ is the function of the complex argument $ \lambda $ defined by the requirement that
\begin{equation} f - f_\xi ( \lambda ) \xi \in \operatorname{Ran} ( A - \lambda )  , \label{deffxi}\end{equation}
that is, $ f_\xi ( \lambda ) $ is the component of $ f $ along $ \xi $ in the decomposition $ H = \operatorname{Ran} ( A - \lambda ) \dot{+} \mathcal L \{ \xi \} $.  
This formula defines the function $ f_\xi ( \lambda ) $ on the complement of the set
\[ S = \{ \lambda \in \C \colon \xi \in \operatorname{Ran} ( A - \lambda ) \} . \]

Notice that $ S \cap \R = \emptyset $ by lemma \ref{offR}. The following assertions are now facts of the general theory \cite[p. 10--11]{GG}, 

\medskip

(a) $ S $ is discrete in $ \C $;

(b) For any $ f \in H $ the function $ f_\xi $ is meromorphic in $ \C $, all poles of it are contained in $ S $, and for any $ z \in S $ there exists an $ N = N(z) $ such that the singularity of $ ( \cdot -z)^N f_\xi $ at $ z $ (if any) is removable for all $ f \in H $, that is, the multiplicities of poles of $ f_\xi $ at $ z $ are bounded uniformly in $ f \in H $.  

\medskip

For completeness we provide a proof of these facts. Let $ \tilde A $ be a selfadjoint extension of the operator $ A $. Define the function 
\[ \varphi ( z ) =  \bigl( \tilde{A} - \overline w \bigr) \bigl( \tilde{A} - z \bigr)^{ - 1} \xi . \]
This function is analytic in $ \rho ( \tilde{A} ) $ and does not vanish on this set. The point of considering it is the fact that 
\[ \varphi ( z ) \perp \operatorname{Ran} \left( A - \overline z \right) , \; z \in \rho ( \tilde{A} ) , \]
which is verified by the following computation. For any $ \eta \in \mathcal D $ we have
\begin{align*}
\left\langle ( A - \overline z ) \eta , \bigl( \tilde{A} - \overline w \bigr) \bigl( \tilde{A} - z \bigr)^{ - 1} \xi \right\rangle = \left\langle ( A - \overline z ) \eta , \xi + ( z - \overline w ) \bigl( \tilde{A} - z \bigr)^{ - 1} \xi \right\rangle = \\ \left\langle ( A - w ) \eta , \xi \right\rangle + ( w - \overline z ) \langle \eta , \xi \rangle + \left\langle ( A - \overline z ) \eta , ( z - \overline w ) \bigl( \tilde{A} - z \bigr)^{ - 1} \xi \right\rangle ,
\end{align*}
which is zero because the first term in the right hand side vanishes by the choice of $ \xi $ and the other two cancel each other because $ A $ in $ ( A - \overline z ) \eta $ in the third term can be replaced by the operator $ \tilde A  $. 

Taking scalar product of \eqref{deffxi} with $ \varphi ( \overline \lambda ) $ we obtain 
\begin{equation} \label{fxi} f_\xi ( \lambda ) \langle \xi , \varphi ( \overline \lambda ) \rangle = \left\langle f , \varphi ( \overline \lambda ) \right\rangle ,\; \lambda \notin S \cup \sigma ( \tilde A ) . \end{equation}
The operator $ \tilde A $ has discrete spectrum, hence the function $ \Psi ( \lambda ) = \langle \xi , \varphi ( \overline \lambda ) \rangle $ is meromorphic in $ \C $ with all poles lying in $ \sigma ( \tilde{A} ) $. It does not vanish identically because $ \Psi ( w ) = \| \xi \|^2 \ne 0 $, and the set of zeroes of $ \Psi $ in $ \rho ( \tilde A ) $ coincides with $ S $. It follows that $ S $ is discrete in $ \C $ and the function $ f_\xi $ is meromorphic with poles contained in $ \sigma ( \tilde A ) \cup S $. Notice now that the definition \eqref{deffxi} of $ f_\xi $ does not depend on the choice of the selfadjoint extension $ \tilde A $. Since $ A $ is simple and $ n_\pm ( A ) = 1 $, the spectra of different selfadjoint extensions $ \tilde A $ are disjoint (in fact, interlacing, but we do not need that). It follows that all poles of $ f_\xi $ are contained in $ S $. The function $ \Psi $ does not depend on $ f $, hence the multiplicity of a pole of $ f_\xi $ does not exceed the multiplicity of the zero of $ \Psi $ at the same point. Assertions (a) and (b) are established. Moreover, considering a nonzero $ f \in \operatorname{Ran} ( A - z )^\perp $ we obtain that for this $ f $ the right hand side in \eqref{fxi} is nonzero at $ \lambda = z $, and thus

\medskip

(c) The maximum of the multiplicity of the pole of $ f_\xi $ at a point $ z\in S $ over $ f \in H $ coincides with the 
 multiplicity of the zero of $ \Psi $ at $ z $. 
 
From now on we fix an arbitrary selfadjoint extension $ \tilde A $ of the operator $ A $. Let $ d\nu_\xi $ be the spectral measure of the operator $ \tilde A $ on $ \xi $. Then for any $ f , g \in H $
 \begin{equation}\label{sc-prod} \langle f , g \rangle = \int f_\xi ( x ) \overline{ g_\xi ( x )} d\nu_\xi ( x ) . \end{equation}

This assertion is a fact of general theory \cite[p. 49]{GG}. For completeness we provide an elementary proof of it in the situation under consideration, but postpone the proof until the end of this section.

Thus the map $ \mathcal W \colon f \mapsto f_\xi $ is an isomorphism of $ H $ onto a Hilbert space $ X $ of meromorphic functions which are analytic on the real line and square summable with respect to $ d \nu_\xi $-measure, endowed with the norm 
\[ \left\| u \right\|_X^2 : = \int \left|u(t)\right|^2 d\nu_\xi (t) . \] 

For $ \lambda \in U $ the definition \eqref{deffxi} gives
\[ f_\xi ( \lambda ) = \frac{ f( \lambda )}{ \xi ( \lambda ) } . \]
We have thus constructed a function $ \xi $ and a discrete set $ S $ such that the function $ f/\xi $ admits meromorphic continuation for all $ f \in H $, poles of this continuation are in $ S $, and the corresponding map is a Hilbert space isomorphism. The next step would be to show that the multiplication by an entire function with $ S $ being the set of its zeroes turns $ X $ into a de Branges space. 

In this way, we are first going to show that the space $ X $ itself satisfies the division property off the set $ S $. 

\begin{lemma}\label{divX}
Let $ z \notin S $. Then for any $ u \in X $ such that $ u ( z ) = 0 $ the function $ u/(\cdot - z ) \in X $.
\end{lemma}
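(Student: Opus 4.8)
The plan is to transfer the division to $X$ through the isomorphism $\mathcal{W}\colon f \mapsto f_\xi$, by identifying the operation of dividing by $(\cdot - z)$ in $X$ with solving the equation $(A-z)g = f$ in $H$. The essential ingredient I would establish first is the intertwining relation
\[ (Af)_\xi(\lambda) = \lambda\, f_\xi(\lambda), \qquad f \in \mathcal{D},\ \lambda \notin S, \]
which says that $\mathcal{W}$ carries $A$ into multiplication by the independent variable. It follows directly from the defining relation \eqref{deffxi}: writing $f = f_\xi(\lambda)\xi + (A-\lambda)h$ with $h \in \mathcal{D}$, the decomposition $H = \operatorname{Ran}(A-\lambda)\dot{+}\mathcal{L}\{\xi\}$ being available precisely because $\lambda \notin S$, one computes $Af = \lambda f_\xi(\lambda)\xi + (A-\lambda)(f + \lambda h)$ with $f + \lambda h \in \mathcal{D}$, which is exactly the statement that $(Af)_\xi(\lambda) = \lambda f_\xi(\lambda)$. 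Note that this computation does not require $\xi \in \mathcal{D}$.

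Next I would unwind the hypothesis. Since $\mathcal{W}$ is an isomorphism of $H$ onto $X$, write $u = f_\xi$ for a unique $f \in H$. The assumption $u(z) = f_\xi(z) = 0$ together with $z \notin S$ says, via \eqref{deffxi}, that $f - 0\cdot\xi = f \in \operatorname{Ran}(A-z)$. Because $A$ is regular by lemma \ref{11}, the operator $A - z$ is injective with closed range for every $z \in \mathbb{C}$, so there is a unique $g \in \mathcal{D}$ with $f = (A-z)g$.

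Now apply the intertwining relation to $g$, together with the linearity of $\mathcal{W}$: for $\lambda \notin S$,
\[ u(\lambda) = f_\xi(\lambda) = \bigl((A-z)g\bigr)_\xi(\lambda) = (Ag)_\xi(\lambda) - z\,g_\xi(\lambda) = (\lambda - z)\,g_\xi(\lambda). \]
Hence $u(\lambda)/(\lambda - z) = g_\xi(\lambda)$ for all $\lambda \notin S \cup \{z\}$. Both sides are meromorphic on $\mathbb{C}$ (on the left, the prospective pole at $z$ is cancelled by the zero of $u$), so they coincide as meromorphic functions, whence $u/(\cdot - z) = g_\xi = \mathcal{W}(g) \in X$, as required.

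I expect the only points to watch to be the clean derivation of the intertwining relation and the careful use of regularity: one must confirm that $f \in \operatorname{Ran}(A-z)$ genuinely produces $g \in \mathcal{D}$, which rests on injectivity and closed range of $A-z$ for every complex $z$ — exactly what $\hat\rho(A) = \mathbb{C}$ delivers — and that the pointwise identity valid off $S$ upgrades to an identity of meromorphic functions. It is worth remarking that this argument does not re-invoke the division property of $H$ directly; that property has already been spent in establishing regularity of $A$, after which the division in $X$ becomes an automatic consequence of the surjectivity of $A-z$ onto its range.
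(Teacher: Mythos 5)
Your proof is correct and is essentially the paper's own argument: the paper also writes $f=(A-z)h$ and $f=(A-\lambda)g+u(\lambda)\xi$ and uses the identity $(A-z)h=(A-\lambda)h+(\lambda-z)h$ to read off $h_\xi(\lambda)=u(\lambda)/(\lambda-z)$, which is exactly the content of your intertwining relation $((A-z)g)_\xi(\lambda)=(\lambda-z)g_\xi(\lambda)$. Packaging that algebra as a separate intertwining lemma is a harmless (and slightly tidier) reorganization, not a different route.
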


\begin{proof}
Let $ u \in X $, $ u(z) = 0 $. Then $ u = f_\xi $ for some $ f \in H $, that is, for any $ \lambda \notin S $
\[ f = ( A-\lambda) g + u(\lambda ) \xi \] 
for some $ g \in \mathcal D $. In particular, $ f = ( A-z) h $ for some $ h \in \mathcal D $. Equalizing we find for $ \lambda \ne z $, $ \lambda \notin S $,
\[ h = ( A -\lambda ) \left[ \frac{ g-h}{\lambda -z } \right] + \frac{u(\lambda)}{\lambda - z } \xi , \]
that is, $ u/(\cdot - z ) = h_\xi \in X $.
\end{proof}

Let $ R $ be the Weierstra\ss\, canonical product for the set $ S$ where the multiplicity of each zero $ z \in S $ is taken to be the maximum of multiplicities of poles at $ z $ of functions from $ \operatorname{Ran} \mathcal W $ (by (c) the maximum exists and coincides with the multiplicity of the corresponding zero of the function $ \Psi $).  

Let $ \mathcal X = R X $. Then $ \mathcal X $ is the Hilbert space with respect to the scalar product 
\begin{equation}\label{norm} \left\langle x , x^\prime \right\rangle_{ \mathcal X } = \int_\R x ( t ) \overline{ x^\prime ( t ) } \frac{ d\nu_\xi ( t)}{\left| R ( t ) \right|^2 } .\end{equation} 
Recall that $ R(t) \ne 0 $ for real $ t $ as $ S \cap \R = 
\emptyset $.  
By construction, the elements of $ \mathcal X $ are entire functions. 

\begin{lemma}
$ \mathcal X $ is a reproducing kernel Hilbert space.
\end{lemma}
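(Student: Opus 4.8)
The plan is to show that $\mathcal{X}$ is a reproducing kernel Hilbert space by verifying that point evaluations are bounded linear functionals. Since $\mathcal{X} = RX$ is isometrically isomorphic to $X$ (via multiplication by the fixed entire function $R$, which is nonzero on $\mathbb{R}$), and $X$ is in turn isomorphic to $H$ via the map $\mathcal{W}\colon f \mapsto f_\xi$, the strategy is to transport the reproducing kernel structure of $H$ through these isomorphisms. The key observation is that for an entire function $x = Ru \in \mathcal{X}$ with $u = f_\xi \in X$, evaluation at a point $z \in \mathbb{C}$ amounts to evaluating $R(z) u(z) = R(z) f_\xi(z)$, so I need to control $f_\xi(z)$ in terms of $\|f\|_H = \|u\|_X = \|x\|_{\mathcal{X}}$.

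First I would treat the generic points $z \notin S$. For such $z$ the formula \eqref{fxi} expresses $f_\xi(z)$ as
\[
f_\xi(z) = \frac{\langle f, \varphi(\overline z)\rangle}{\Psi(z)},
\]
and since $\Psi(z) \ne 0$ for $z \notin S$ (off the spectrum; on $\sigma(\tilde A)$ one uses that the definition of $f_\xi$ is extension-independent, as already argued in the text), the Schwarz inequality gives $|f_\xi(z)| \le \|\varphi(\overline z)\|\,\|f\|_H / |\Psi(z)|$. Hence $x \mapsto x(z) = R(z) f_\xi(z)$ is a bounded functional on $\mathcal{X}$, which produces the reproducing kernel at every $z \notin S$.

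The main obstacle will be the points $z \in S$, where $f_\xi$ may have a pole but $x = R f_\xi$ is nevertheless entire because $R$ vanishes to the maximal pole order at each such point. Here I would argue as follows. Fix $z_0 \in S$ with pole multiplicity $N = N(z_0)$ (the maximum over $f \in H$, matched to the zero order of $R$ at $z_0$ by (c)). For $x = Ru \in \mathcal{X}$ the value $x(z_0)$ and indeed each Taylor coefficient of $x$ at $z_0$ is a finite linear combination of the Laurent coefficients of $u$ at $z_0$, and these Laurent coefficients are the residue-type functionals $f \mapsto \lim_{\lambda \to z_0}(\lambda - z_0)^j f_\xi(\lambda)$. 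Using \eqref{fxi} again, each such functional equals a coefficient extracted from $\langle f, \varphi(\overline\lambda)\rangle / \Psi(\lambda)$; since $\Psi$ has a zero of order exactly $N$ at $z_0$ and $\lambda \mapsto \langle f, \varphi(\overline\lambda)\rangle$ is analytic near $z_0$ with values bounded by $\|\varphi(\overline\lambda)\|\,\|f\|_H$, each Laurent coefficient is a bounded linear functional of $f$, hence of $x$. Therefore $x \mapsto x(z_0)$ is bounded on $\mathcal{X}$ as well.

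Combining the two cases, point evaluation is a bounded linear functional at every $z \in \mathbb{C}$, so by the Riesz representation theorem $\mathcal{X}$ possesses a reproducing kernel and is a reproducing kernel Hilbert space. Alternatively, and more cleanly, I would note that the reproducing kernel $K_{\mathcal{X}}(w,z)$ is obtained directly by transporting the kernel of $H$: if $\Xi_z \in H$ represents the functional $f \mapsto f_\xi(z)$ (a multiple of a deficiency element at $z$, or a residue combination thereof when $z \in S$), then $R(z)\,\overline{R(w)}\,\langle \Xi_z, \Xi_w\rangle_H$ furnishes the kernel, entire in $z$ and anti-entire in $w$. I expect the residue bookkeeping at the finitely-nearby points of $S$ to be the only delicate point, everything else being a formal consequence of the isometries already established.
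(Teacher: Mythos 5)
Your proposal is correct and follows essentially the same route as the paper: both rest on formula \eqref{fxi}, which expresses $f_\xi(z)$ as $\langle f,\varphi(\overline z)\rangle/\Psi(z)$, giving boundedness of point evaluation off $S$ (with the same device of switching the selfadjoint extension at real points of $\sigma(\tilde A)$), and both handle $z\in S$ by the matching of the zero order of $R$ with that of $\Psi$. Your Laurent-coefficient bookkeeping at points of $S$ is a slightly more elaborate phrasing of the paper's one-line observation that $R/\Psi$ is analytic there, so that $(Rf_\xi)(z)=(R/\Psi)(z)\,\langle f,\varphi(\overline z)\rangle$ directly exhibits the kernel element.
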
 

\begin{proof}
For $ z\notin S $ the value of $ R f_\xi $ at $ z $ reads off from \eqref{fxi} to be\footnote{if $ z \in \R $ one should take the selfadjoint extension $ \tilde A $ for which $ z \notin \sigma ( \tilde A ) $ in the definition of the function $ \varphi $.}
\[ ( R f_\xi ) ( z ) = \left\langle R f_\xi , \frac{ \overline{ R(z)}}{\overline{ \Psi ( z ) }} R \mathcal W \varphi ( \overline z ) \right\rangle_{\mathcal X } . \]
For $ z \in S $ we take into account that the function $ R / \Psi $ is analytic at $ z $ by the definition of $ R $ and similarly have 
\[  ( R f_\xi ) ( z ) = \left(\frac R\Psi \right) (z ) \left\langle R f_\xi , R \mathcal W \varphi ( \overline z ) \right\rangle_{\mathcal X } . \] 
\end{proof}

The division property of lemma \ref{divX} is obviously inherited by the space $ \mathcal X $. Two things still miss for $ \mathcal X $ being a de Branges space -- as yet, the division property is not established for $ z \in S $, and there is no invariance with respect to the conjugation. To get through
we are going to use a particular case of \cite[Theorem 2.1]{BLMS} formulated as follows. 

\begin{theorem}
 \label{integrable}
Let $ Y $ be a reproducing kernel Hilbert space of functions on the real line which is isometrically embedded in $ L^2 ( \R , \mu ) $ for some measure $ \mu $. Assume  that for any $ k \in \R $ and $ f \in 
Y $ satisfying  $ f ( k ) = 0 $ there exists a unique function $ g \in Y $ such that   
$ f ( x ) =  ( x - k ) g ( x )$
for all $ x \in \R $. Then there exist functions $A$, $B$ defined on $\R $ such that for all $x,y\in \R $, $ x \ne y $, the reproducing kernel $ K $ of the space $ Y $ admits the integrable representation 
\begin{equation} 
K(x,y)=\displaystyle \frac{A(x)\overline{B(y)} - B(x)\overline{A(y)}}{x-y}.
\end{equation}
\end{theorem}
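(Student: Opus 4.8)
The plan is to recover the two-dimensional structure responsible for the representation from the operator of multiplication by the independent variable, exactly the device used in the proof of Theorem \ref{main}. Here $U=\mathbb R$ and the division property holds at every real point, so let $S$ be the operator $(Sf)(t)=tf(t)$ on $\mathcal D=\{f\in Y\colon tf\in Y\}$. Repeating the arguments of Lemma \ref{11} (which used nothing beyond the division property, non-degeneracy and $\delta_k\notin Y$), I obtain that $S$ is closed and symmetric, that $\operatorname{Ran}(S-\lambda)=\{f\in Y\colon f(\lambda)=0\}$ has codimension one with $\ker(S-\lambda)=\{0\}$, and hence that $S$ is regular with $n_\pm(S)=1$. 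Moreover the reproducing property gives $\langle Sf,K_y\rangle=(Sf)(y)=yf(y)=y\langle f,K_y\rangle$ for $f\in\mathcal D$, so that $K_y\in\Dom S^*$ and $S^*K_y=yK_y$ for every real $y$.

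Next I would write the Lagrange (Green) identity attached to $S$. Fixing nonzero vectors $u\in\mathcal N_+:=\ker(S^*-i)$ and $v\in\mathcal N_-:=\ker(S^*+i)$ --- each space being one-dimensional --- the von Neumann decomposition $\Dom S^*=\Dom\bar S\,\dot{+}\,\mathcal N_+\,\dot{+}\,\mathcal N_-$ yields, for all $a,b\in\Dom S^*$,
\[ \langle S^*a,b\rangle-\langle a,S^*b\rangle=2i\bigl(\langle a_+,b_+\rangle-\langle a_-,b_-\rangle\bigr), \]
where $a_\pm$ is the $\mathcal N_\pm$-component of $a$ (this is verified directly on the vectors $u,v$ and on $\Dom\bar S$). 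Applying it to $a=K_y$, $b=K_x$ and using $S^*K_x=xK_x$, the left-hand side equals $(y-x)K(x,y)$. Writing $(K_x)_+=\beta(x)u$ and $(K_x)_-=\gamma(x)v$ with scalar, pointwise defined functions $\beta,\gamma$ on $\mathbb R$, I arrive at
\[ (x-y)K(x,y)=2i\Bigl(\|v\|^2\,\overline{\gamma(x)}\gamma(y)-\|u\|^2\,\overline{\beta(x)}\beta(y)\Bigr), \]
that is, $(x-y)K(x,y)$ is the difference of two positive rank-one Hermitian kernels.

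It then remains to record the elementary linear-algebra fact that a difference $c_2\,q(x)\overline{q(y)}-c_1\,p(x)\overline{p(y)}$ of two positive rank-one Hermitian kernels can be put in the antisymmetric form $A(x)\overline{B(y)}-B(x)\overline{A(y)}$: setting $\rho_1=\sqrt{c_1}\,p$, $\rho_2=\sqrt{c_2}\,q$ and $A=\rho_1+\rho_2$, $B=(\rho_1-\rho_2)/(2i)$, one computes $A(x)\overline{B(y)}-B(x)\overline{A(y)}=i\bigl(\rho_1(x)\overline{\rho_1(y)}-\rho_2(x)\overline{\rho_2(y)}\bigr)$, and an elementary rescaling matches the constant in the display above. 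Dividing by $x-y$ gives the asserted representation. The relation $(x-x)K(x,x)=0$ is automatic, which is precisely the compatibility needed for the skew-Hermitian kernel $(x-y)K(x,y)$ to admit such a decomposition; by the analogue of Lemma \ref{offR} the vectors $u,v$ have no real zeros, so the resulting $A,B$ do not vanish simultaneously.

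The main obstacle is the second paragraph, and specifically the bookkeeping of the Lagrange identity when $\mathcal D$ is not dense in $Y$: then $S^*$ must be read as a linear relation and the von Neumann decomposition invoked in the form valid for non-densely defined symmetric operators, as in the references to \cite{de Snoo} already used in the paper. One must also confirm that $K_y$ genuinely lies in $\Dom S^*$ (granted by the computation in the first paragraph) and that the coefficients $\beta,\gamma$ are well defined at each point (which needs only the directness of the sum, $\mathcal N_+\cap(\Dom\bar S\,\dot{+}\,\mathcal N_-)=\{0\}$). Once the boundary form is correctly identified, the rank-two conclusion, and hence the integrable representation, follows mechanically.
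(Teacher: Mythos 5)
Be aware that the paper contains no internal proof of Theorem \ref{integrable} to compare against: it is quoted as a particular case of \cite[Theorem 2.1]{BLMS} and then applied. Your argument is therefore a reconstruction of the imported result, and it is correct in substance; it is also close in spirit to the paper's Section 2, which studies the very same multiplication operator, there via Krein's model and a fixed deficiency vector $\xi$ rather than via the boundary form. I checked the two computational steps: the Lagrange identity $\langle S^*a,b\rangle-\langle a,S^*b\rangle=2i\bigl(\langle a_+,b_+\rangle-\langle a_-,b_-\rangle\bigr)$ does hold (the cross terms between $\mathcal N_+$ and $\mathcal N_-$ cancel), and the identity $A(x)\overline{B(y)}-B(x)\overline{A(y)}=i\bigl(\rho_1(x)\overline{\rho_1(y)}-\rho_2(x)\overline{\rho_2(y)}\bigr)$ for $A=\rho_1+\rho_2$, $B=(\rho_1-\rho_2)/(2i)$ is exact, so any difference of two rank-one Hermitian kernels has the required antisymmetric form. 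The one step you must actually carry out, rather than merely flag, is the non-dense case: the paper only establishes $\dim\mathcal D^\perp\le 1$, and when $\mathcal D^\perp\ne\{0\}$ the adjoint $S^*$ is a linear relation with multivalued part $\mathcal D^\perp$, so the von Neumann decomposition and the Green identity must be applied to the graph elements $\{K_y,\,yK_y\}$ (which do belong to the relation $S^*$ by your first-paragraph computation) rather than to the vectors $K_y$; the coefficients $\beta(y),\gamma(y)$ are then the components of the pair in the graph-orthogonal decomposition, as in \cite{de Snoo}, and the rest goes through verbatim. Two harmless slips: with the convention $\langle f,K_x\rangle=f(x)$ your left-hand side is $(y-x)\langle K_y,K_x\rangle=(y-x)\overline{K(x,y)}$, which interchanges the barred and unbarred factors in your display but not the form of the conclusion; and the statement does not assume non-degeneracy of $Y$, though this costs only a remark, since the defect number of $S$ is constant on the connected set $\hat\rho(S)$ and a degenerate point would force $Y=\{0\}$, for which the claim is trivial.
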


Let us apply this theorem to the space $ Y $ made of restrictions of the functions from $ \mathcal X $ to the real line endowed with the norm \eqref{norm}. According to it, there exist functions $ A , B $ on the real line such that the reproducing kernel $ K^{\mathcal X }_y $ of the space $ \mathcal X $ at a point $ y \in \R $ has the form  
\begin{equation} \label{integ}
K_y^{ \mathcal X } (x) =\displaystyle \frac{A(x)\overline{B(y)} - B(x)\overline{A(y)}}{x-y} , \; x \in \R, \; x\ne y .
\end{equation}
Here we took into account that for $ y \in \R $ the reproducing kernel of the space $ Y $ at the point $ y $ is obviously the restriction of $ K^{\mathcal X }_y $ to the real line. 

The function $ K^{\mathcal X }_y $ is entire for any $ y \in \R $. This implies that functions $ A $ and $ B $ are entire as well. Indeed, one can solve \eqref{integ} for $ A $ and $ B $ using two different values of $ y $ expressing them as linear combinations of entire functions $ K_y^{ \mathcal X } (\cdot ) (\cdot - y ) $, unless $ A \equiv (\textrm{const}) B $, and thus $ K^{\mathcal X }_y = (\textrm{const}) B(x) \overline{ B(y)} \left( x-y \right)^{ -1 } $ which means that $ B(y ) = 0 $ for all $ y $, an absurd conclusion.

Now, for all $ y \in \C $ we have
\begin{equation} \label{integC}
K_y^{ \mathcal X } (x) =\displaystyle \frac{A(x)\overline{B(y)} - B(x)\overline{A(y)}}{x-\overline y} , \; x \in \R, \; x\ne y .
\end{equation}
This follows from the fact that for a fixed $ x \in \R $ the kernel $  K_y^{ \mathcal X } (x)  $ is an antiholomorphic function of $ y $ (just because $ K_y^{ \mathcal X } (x)  = \overline{ K_x^{ \mathcal X } (y) } $ for all $ x , y \in \C $), the right hand side in \eqref{integC} is antiholomorphic in $ y $ as well, and the two coincide for $ y \in \R $ by \eqref{integ}. Using the analyticity once more, this time in $ x $ for fixed $ y \in \C $, we conclude that \eqref{integC} holds for all $ x\in \C $, $ x\ne \overline y $, and that the numerator there vanishes at $ x = \overline y $, 
$ A ( \overline y ) \overline{ B( y ) } = \overline{ A(y) } B ( \overline y ) $, or, using the notation $ F^* (z ) := \overline{ F ( \overline z ) } $, 
\[ A^* ( y ) B ( y ) = A ( y ) B^* ( y ) , \; y \in \C . \] 
This equality means that nonreal zeroes of $ A $ form conjugate pairs, and the multiplicities of zeroes at conjugate points are equal. Indeed, otherwise the functions $ B $ and $ A $ would have a common zero, say $ a $, the reproducing kernel $ K^{\mathcal X}_a = 0 $, so all functions from $ \mathcal X $ would vanish at $ a $. For $ a \notin S $ this contradicts the division property of lemma \ref{divX}, and for $ a \in S $ it suffices to take a nonzero $ f \in \operatorname{Ran}\left( A-a \right)^\perp $ to get a contradiction, as $ (R f_\xi ) ( a ) \ne 0 $ for such an $ f$ by construction. The same assertion holds for the function $ B$. 

It follows that $ A^* / A = B^* / B $ extends to a zero-free entire function, which thus has an entire square root. Let 
\[ \Omega = \sqrt{ \frac{ A^*}A } , \]
then $ \Omega $ is an entire function satisfying $ \Omega \Omega^* = 1 $. 

Define now the Hilbert space $ \mathcal H = \Omega \mathcal X $ with the norm $ \left\| \Omega h \right\|_{ \mathcal H } := \left\| h \right\|_{ \mathcal X } $, $ h \in \mathcal X $. It is a reproducing kernel Hilbert space of entire functions isometrically embedded in $ L^2 ( \R , d\mu ) $ for some measure $ \mu $, and satisfying the property that for any $ u \in \mathcal H $ and $ z \notin S $ such that $ u ( z ) = 0 $ the function $ u/(\cdot - z ) \in \mathcal H $. All these properties of $ \mathcal H $ are inherited from $ \mathcal X $. The reproducing kernel $ K^{ \mathcal H }_y $ of $ \mathcal H $ admits representation \eqref{integC} with \textit{real} entire functions $ A $ and $ B $. Explicitly, for any $ y \in \C $, $ t \in \R $
\begin{equation}\label{Kh} K_y^{ \mathcal H } ( t ) = \overline{ \Omega ( y )} K_y^{ \mathcal X } ( t ) \Omega ( t ) = \frac{A^\kappa (t)\overline{B^\kappa (y)} - B^\kappa (t)\overline{A^\kappa (y)}}{t-\overline y} ,\end{equation}
\[ A^\kappa := \Omega A , \; B^\kappa := \Omega B . \]
By the definition of $ \Omega $ the arguments of $ A^\kappa (t) $ and $ B^\kappa (t) $ are $ 0 \mod \pi $, and thus $ A^\kappa (t) $ and $ B^\kappa (t) $ are real for real $ t $. 

Let $ E = A^\kappa + i B^\kappa $. Then $ E $ is an Hermite--Biehler function. This immediately follows from plugging $ t = y $ in \eqref{Kh} and the fact that $ K_y^{ \mathcal H } ( y ) = \left\| K_y^{ \mathcal H } \right\|^2 > 0 $ for all complex $ y $, the property inherited by $ \mathcal H $ from the space $ \mathcal X $. 

Let $ \mathcal H ( E ) $ be the de Branges space corresponding to the function $ E $. Then $ \mathcal H ( E ) $ coincides with $ \mathcal H $ because they have the same reproducing kernels by construction. This proves theorem \ref{main} with the function $ \Phi $ of the form $ \Phi = \frac \xi{ R \Omega } $.

It remains to prove the postponed assertions -- discreteness of the spectrum of selfadjoint extensions $ \tilde A $, and formula \eqref{sc-prod}.

\begin{lemma}\label{regular} Let $ T $ be a regular closed symmetric operator in a Hilbert space $ \mathfrak H $ with the domain $ \mathcal D $ such that $ n_\pm ( T ) = 1 $. Then any selfadjoint extension of $ T $ has discrete spectrum.
\end{lemma}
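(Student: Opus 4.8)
The plan is to show that $\tilde T$ has no essential spectrum; by Weyl's criterion this is equivalent to the spectrum being a discrete set of eigenvalues of finite multiplicity. Since $n_\pm(T)=1$, any selfadjoint extension $\tilde T$ has domain $\mathcal D(\tilde T) = \mathcal D \dot{+} \mathcal L\{\psi\}$ for a single element $\psi \notin \mathcal D$ (the standard extension theory of symmetric operators with finite deficiency indices, valid also without the density assumption, cf. \cite{de Snoo}). Suppose, for contradiction, that some real $\lambda_0$ lies in $\sigma_{\mathrm{ess}}(\tilde T)$. Then there is an orthonormal sequence $u_n \in \mathcal D(\tilde T)$ with $(\tilde T - \lambda_0) u_n \to 0$; being orthonormal, $u_n$ converges weakly to $0$. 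I would decompose $u_n = f_n + c_n \psi$ with $f_n \in \mathcal D$, $c_n \in \mathbb C$, and use that $\tilde T f_n = T f_n$.

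First I would treat the case in which $(c_n)$ has a bounded subsequence, along which $c_n \to c$. From
\[ (\tilde T - \lambda_0) u_n = (T - \lambda_0) f_n + c_n (\tilde T - \lambda_0)\psi \longrightarrow 0 \]
one gets $(T-\lambda_0) f_n \to -c\,(\tilde T - \lambda_0)\psi$. Here the regularity of $T$ enters decisively: since $\lambda_0 \in \hat\rho(T)$ there is $\gamma>0$ with $\|(T-\lambda_0)g\| \ge \gamma \|g\|$ for all $g \in \mathcal D$, so convergence of the images $(T-\lambda_0) f_n$ forces $(f_n)$ to be Cauchy, whence $f_n \to f$, and closedness of $T$ gives $f \in \mathcal D$. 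Then $u_n \to f + c\psi$ strongly, which combined with $u_n \rightharpoonup 0$ yields $f + c\psi = 0$ and therefore $\|u_n\| \to 0$, contradicting $\|u_n\|=1$.

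In the remaining case $|c_n| \to \infty$ along a subsequence. Here I would normalize by setting $v_n = u_n/c_n = f_n/c_n + \psi$; since $\|v_n\| = |c_n|^{-1} \to 0$ we have $f_n/c_n \to -\psi$, while
\[ (T - \lambda_0)(f_n/c_n) = (\tilde T - \lambda_0) v_n - (\tilde T - \lambda_0)\psi \longrightarrow -(\tilde T - \lambda_0)\psi . \]
Closedness of $T$ now forces $-\psi \in \mathcal D$, i.e. $\psi \in \mathcal D$, contradicting the choice of $\psi$. Since both cases are impossible, $\sigma_{\mathrm{ess}}(\tilde T)=\emptyset$, which is the assertion.

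The main obstacle is the dichotomy on the coefficients $(c_n)$: the bounded case is exactly where regularity at the real point $\lambda_0$ — rather than merely at nonreal points — is indispensable, as it upgrades convergence of the images $(T-\lambda_0)f_n$ to convergence of the $f_n$ themselves and so contradicts weak nullity; the unbounded case, by contrast, needs only closedness of $T$ and that $\psi \notin \mathcal D$. A minor point to confirm along the way is that the decomposition $\mathcal D(\tilde T) = \mathcal D \dot{+} \mathcal L\{\psi\}$, with $\dim \mathcal D(\tilde T)/\mathcal D = n_+(T)=1$, persists without assuming $\mathcal D$ dense, for which one invokes the relation-theoretic extension theory referenced in the paper.
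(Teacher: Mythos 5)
Your argument is correct, but it takes a genuinely different route from the paper's. The paper does not invoke the Weyl criterion: it first observes that, $T$ being regular, $\operatorname{Ran}(\tilde T-\lambda)$ is closed for every real $\lambda$ (it is either all of $\mathfrak H$ or equals $\operatorname{Ran}(T-\lambda)$, which has codimension $1$), so every spectral point of $\tilde T$ is a simple eigenvalue; it then rules out an accumulation point of eigenvalues by normalizing the eigenfunctions as $e_n=f_n+\xi$ with $f_n\in\mathcal D$ and running essentially your Case 1 (regularity bounds $\|f_n\|$, closedness gives $f_n\to f\in\mathcal D$, mutual orthogonality of the $e_n$ forces the limit $f+\xi$ to vanish, hence $\xi\in\mathcal D$). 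The key structural difference is that the paper works only with genuine eigenfunctions, which automatically have a nonzero component along $\xi$ (since $\ker(T-\lambda)=\{0\}$ by regularity) and can therefore be rescaled to have coefficient exactly $1$; this eliminates your dichotomy on $(c_n)$ entirely. Your Weyl-sequence version must pay for the extra generality of the test vectors with the second case $|c_n|\to\infty$, but in exchange it handles eigenvalue accumulation, continuous spectrum and infinite multiplicity in one stroke, whereas the paper needs the preliminary closed-range step to exclude the latter two. Both arguments rest on the same two pillars --- the lower bound $\|(T-\lambda_0)g\|\ge\gamma\|g\|$ from regularity at \emph{real} points, and the decomposition $\operatorname{Dom}(\tilde T)=\mathcal D\dotplus\mathcal L\{\psi\}$ valid without density of $\mathcal D$ --- and your proof establishes exactly the stated conclusion.
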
 

\begin{proof} Consider a selfadjoint extension, $ \tilde T = \tilde T^* \supset T $. Any selfadjoint extension of $ T $ is proper, hence for any $ \lambda \in \R $ the linear set $ \operatorname{Ran} ( \tilde T - \lambda ) $ is either $ \mathfrak H $, or coincides with $ \operatorname{Ran} ( T - \lambda ) $. This means that any point $ \lambda \in \sigma ( \tilde T ) $ is a simple eigenvalue of $ \tilde T $. Assume by contradiction that a sequence of eigenvalues $ \lambda_n $ has a finite limit, $ \lambda_n \to \lambda $. Let $ e_n $ be the corresponding eigenfunctions. The domain of $ \tilde T $ is a linear sum of $ \mathcal D $ and a one-dimensional subspace spanned by a nonzero vector, $ \xi \in 
\mathfrak H $. Fix the normalization of $ e_n $ assuming that $ e_n = f_n +\xi $, $ f_n \in \mathcal D $. Then 
\[ ( \tilde T - \lambda_n ) e_n = ( T - \lambda_n ) f_n + ( \tilde T - \lambda_n ) \xi = 0 . \]
Thus, $ ( T - \lambda_n ) f_n $ has a finite limit, $ ( \lambda - \tilde T ) \xi $. 
Now, $ \lambda \in \hat \rho ( T ) $ means that there exists an $ \epsilon > 0 $ such that $ \| ( T - w ) u \| \ge \epsilon \| u \| $, $ u \in \mathcal D $, for all $ w $ sufficiently close to $ \lambda $, therefore the norms of $ f_n $ are bounded, hence $ ( T - \lambda ) f_n $ also converges, and so does $ f_n $. Let $ f = \lim f_n $, then $ e_n \longrightarrow f + \xi $. Since $ e_n $'s are mutually orthogonal, this implies that $ \xi + f = 0 $. On the other hand, $ f \in \mathcal D $ by closedness of $ T $. We infer that $ \xi \in \mathcal D $ which is the required contradiction. \end{proof}

To establish \eqref{sc-prod} it suffices to verify that 
\\
(A) $ \int \left| f_\xi \right|^2 d\nu_\xi = \| f \|^2 $ for any eigenfunction $ f $ of $ \tilde A $, \\
(B) $ \int f_\xi \overline{ g_\xi } d\nu_\xi = 0 $ when $ f $ and $ g $ are eigenfunctions of $ \tilde A $ corresponding to different eigenvalues. 

For any eigenvalue $ \lambda $ of $ \tilde A $ we have $ \operatorname{Ran} ( \tilde A - \lambda ) = \operatorname{Ran} ( A-\lambda ) = \ker \bigl( \tilde A - \lambda \bigr)^\perp $. From definition \eqref{deffxi} of $ f_\xi $ we conclude that if $ f \in \ker (\tilde A - z ) $ then $ f_\xi ( \lambda ) = 0 $ for all $ \lambda \in \sigma ( \tilde A ) $, $ \lambda \ne z $. This immediately implies (B). For $ \lambda = z $, on taking scalar product of \eqref{deffxi} with $ f $ we find that $ f_\xi ( z ) = \left\| f \right\|^2 / \langle \xi , f \rangle $. Since $ \nu_\xi \{ z \} = | \langle \xi , f \rangle |^2 / \left\| f \right\|^2 $ this implies (A).  

%

\section{Proof of Theorem \ref{main-inv}}

Under the assumptions of theorem \ref{main-inv} one can apply theorem \ref{main}. The reproducing kernels of the spaces $ H $ and $ \mathcal H ( E ) $ are related as follows,
\[ K_y^H (t) = \overline{\Phi ( y )} \Phi ( t ) K_y^{ \mathcal H ( E ) } (t) , \; y \in U . \] 
The assumption of reality of $ H$ means that the argument of $ \Phi $ is constant hence one can assume $ \Phi $ to be real (for the multiplication by a unimodular constant preserves the assertion of theorem \ref{main}), thus 
\begin{equation}\label{KH-deBr} K_y^H (t) = \Phi ( y ) \Phi ( t ) K_y^{ \mathcal H ( E ) }(t) , \; y \in U . \end{equation} 
This proves the first assertion of theorem \ref{main-inv}. For the uniqueness part assume that \eqref{KH-deBr} is satisfied for two distinct pairs of the function $ \Phi $ and the de Branges space $ \mathcal H (E) $, $ ( \Phi_1 , \mathcal H ( E_1) ) $, and $ ( \Phi_2 , \mathcal H ( E_2) ) $, 
\[ \Phi_1 ( y ) \Phi_1 ( t ) K_y^{ \mathcal H ( E_1 ) }(t) =  \Phi_2 ( y ) \Phi_2 ( t ) K_y^{ \mathcal H ( E_2 ) }(t) . \]
The analyticity of the reproducing kernel of a de Branges space implies that $ \Phi_1 / \Phi_2 $ extends from $ U $ to a real meromorphic function all poles of which are contained in the set of zeroes of the function $ K_y^{ \mathcal H ( E_1 ) } $. Since $  \Phi_1 / \Phi_2 $ does not depend on $ y $, the poles of it are only possible at points $ t \in \R $ such that $ K_y^{ \mathcal H ( E_1 ) } (t) = 0 $ for all $ y \in U $. By symmetry this means that $ K_t^{ \mathcal H ( E_1 ) } (y) = 0 $ for all $ y \in U $, hence $ K_t^{ \mathcal H ( E_1 ) } = 0 $, which is impossible as $ E_1 $ has no real zeroes. It follows that the function $ \Phi_1 / \Phi_2 $ is actually entire. It is zero-free since $ \Phi_1 $ and $ \Phi_2 $ can be interchanged in the argument. Theorem \ref{main-inv} is proven. 

\subsection{Extensions and modifications}

A. There is a variant of theorem \ref{main} in which the set $ U $ is supposed to be closed, while all the requirements on the space $ H $ are intrinsic (no emebedding into an $ L^2 $ space is assumed). 

\begin{theorem}\label{main-var}
Let $ H $ be a non-degenerate reproducing kernel Hilbert space of functions on a closed set $ U \subset \mathbb R $ satisfying the division property. Assume that the space $ H $ obeys the following conditions,

\begin{itemize}
\item (symmetry of multiplication) the equality
\[ \left\langle tf, g \right\rangle_H = \left\langle f, tg \right\rangle_H \] 
is satisfied for any $ f,g \in H $ such that $ tf, tg \in H $;
\item (normality) $ \left\| f \right\|_H \le \left\| g \right\|_H $ for any $ f,g \in H $ such that $ | f(x) | \le | g(x) | $ for all $ x \in U $. 
\end{itemize}

Then the conclusion of theorem \ref{main} holds. 
\end{theorem}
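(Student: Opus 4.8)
\emph{Proof proposal.} The plan is to run the proof of Theorem~\ref{main} essentially verbatim, after isolating the precise points at which the isometric embedding into $L^2(U,d\mu)$ and the trace bound \eqref{tra} were used, and replacing each by one of the two intrinsic hypotheses now assumed. Inspection of the proof of Theorem~\ref{main} shows that those assumptions entered in only three places: (1) to guarantee that the operator $A$ of multiplication by the independent variable, defined on $\mathcal D=\{f\in H\colon tf\in H\}$, is symmetric; (2) in Lemma~\ref{11}, to establish $\mathbb R\setminus U\subset\hat\rho(A)$; and (3) when Theorem~\ref{integrable} is applied to the auxiliary space. Everything else -- the lemma giving $\dim\mathcal D^\perp\le 1$, the first half of Lemma~\ref{11}, Lemma~\ref{offR}, the Krein model with the properties (a)--(c), formula \eqref{sc-prod}, Lemma~\ref{divX}, the construction of $R$, $\mathcal X$, $\Omega$ and $E$, and Lemma~\ref{regular} -- is phrased purely in terms of the Hilbert space $H$, the operator $A$, and analyticity, and carries over unchanged.

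For (1), the symmetry $\langle Af,g\rangle_H=\langle f,Ag\rangle_H$ for $f,g\in\mathcal D$ is now exactly the symmetry-of-multiplication axiom (both $tf$ and $tg$ belong to $H$ when $f,g\in\mathcal D$). Closedness of $A$ still follows from the reproducing kernel property, since pointwise limits identify the function $tf$ with the $H$-limit of $tf_n$. The first half of Lemma~\ref{11}, which yields $U\subset\hat\rho(A)$ and $n_\pm(A)=1$, uses only the division property, non-degeneracy and the fact that $\delta_\lambda\notin H$, all of which are intrinsic, so I would reproduce it verbatim.

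The substantive modification is step (2), and here the closedness of $U$ enters: for $p\in\mathbb R\setminus U$ one has $d:=\operatorname{dist}(p,U)>0$. For any $f\in\mathcal D$ the function $(t-p)f=tf-pf$ lies in $H$, and since $|t-p|\ge d$ on $U$ we have $|f(t)|\le |d^{-1}(t-p)f(t)|$ for all $t\in U$. The normality axiom then gives $\|f\|_H\le d^{-1}\|(A-p)f\|_H$, so $A-p$ is bounded below by $d$ and hence $p\in\hat\rho(A)$. Combined with the first half of the lemma this yields $\mathbb R\subset\hat\rho(A)$, while the nonreal points lie in $\hat\rho(A)$ automatically because $A$ is symmetric; thus $A$ is regular with $n_\pm(A)=1$, as required. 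I expect this to be the only place needing genuine thought: it trades the dominated-convergence argument of Lemma~\ref{11} for a one-line bounded-below estimate, and the normality axiom is tailored precisely to make multiplication bounded below away from $U$.

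From here the argument is identical to that of Theorem~\ref{main}. The model produces the space $X$ of meromorphic functions square-summable against $\nu_\xi$, the canonical product $R$ gives $\mathcal X=RX$ with the norm \eqref{norm}, a reproducing kernel Hilbert space of entire functions enjoying the division property off $S$. As to (3), $\mathcal X$ is by its very definition isometrically embedded in $L^2(\mathbb R,\,d\nu_\xi/|R|^2)$, so Theorem~\ref{integrable} applies to its restriction to $\mathbb R$ with no additional hypothesis; moreover the division property demanded there is for real $k$ only, and since $S\cap\mathbb R=\emptyset$ it is supplied by Lemma~\ref{divX}. The integrable representation, the identity $A^*B=AB^*$, the symmetrizer $\Omega=\sqrt{A^*/A}$, and the identification $\mathcal H=\Omega\mathcal X=\mathcal H(E)$ with $E=A^\kappa+iB^\kappa$ Hermite--Biehler without real zeroes then follow as before, producing $\Phi=\xi/(R\Omega)$ and the asserted isomorphism.
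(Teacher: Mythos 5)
Your proposal is correct and follows essentially the same route as the paper: the paper likewise observes that the embedding/trace hypotheses enter only through the inclusion $\mathbb R\setminus U\subset\hat\rho(A)$ in Lemma~\ref{11}, and replaces that step by exactly your estimate $|(x-\lambda)f(x)|\ge\varepsilon|f(x)|$ on $U$ (using closedness of $U$) together with normality to get $\|(A-\lambda)f\|\ge\varepsilon\|f\|$. Your additional bookkeeping of where symmetry of $A$ and the hypotheses of Theorem~\ref{integrable} come from is accurate but left implicit in the paper.
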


\begin{proof}
The only point in the proof of theorem \ref{main} where the finite trace assumption \eqref{tra} was used is the inclusion $ \mathbb R \setminus U \subset \hat\rho ( A ) $ in the proof of lemma \ref{11}. The proof that $ U \subset \hat{\rho } ( L ) $ depends on the division property only, hence this inclusion holds in the situation under consideration. Since $ U $ is closed, for any $ \lambda \in \mathbb R \setminus U $ there exists an $ \varepsilon > 0 $ such that for any $ f \in \mathcal D $ 
\[ | ( x - \lambda ) f ( x ) | \ge \varepsilon | f(x)| , \; x \in U . \]
By the normality assumption then
\[ \| ( A - \lambda ) f \| \ge \varepsilon \| f \| , \]
that is, $ \lambda \in \hat\rho ( A ) $.
\end{proof}

Let us comment on the normality assumption. In the theory of determinantal processes we are interested in spaces $ H $ isometrically embedded into $ L^2 ( U , d\mu ) $ for a Borel measure $ \mu $. The normality assumption in theorem \ref{main-var} is obviously necessary for the existence of such an embedding. The following example shows that it is independent of the division property and the symmetricity of the multiplication.
  
Let $ H $ be the Hilbert space elements of which are restrictions of the functions from the Paley--Wiener space $ PW_\pi $ to the segment $ [-1,1] $. For the norm of an element in $ H $ we take the $ L^2 ( \R ) $-norm of its analytic extension.  

To see that the normality fails for $ H $ consider 
\[ e_n = \left. \frac{ \sin \pi x }{ x-n } \right|_{ [-1,1] } .\]
Clearly, $ | e_n ( x ) | \le \frac 1{n-1} | e_0 ( x ) | $, for $ n \ge 1 $, so $f =  ( n-1 ) e_n $, $ g = e_0 $ satisfy $ | f(x)|\le |g(x) | $ but $ \| f \| = (n-1) \| g \| $. All the other assumption of theorem \ref{main-var} are satisfied for $ H $, as they are inherited from the Paley--Wiener space. It is interesting to notice that the assertion of the theorem holds for the space $ H $ despite the lack of normality.


B. Theorem \ref{main} can be slightly strengthened by noticing that under  assumption \eqref{tra} the division property on a dense set implies the full division property. This observation is not used here but may be of independent interest.    

In the following proposition $ H $ is a non-degenerate reproducing kernel Hilbert space of functions on a Borel set $ U \subset \mathbb R $ such that  the space $ H $ does not contain a nonzero function supported at a single point. Let $ K(x, y ) $ be the reproducing kernel of $ H $.

\begin{proposition}\label{main-dense}
 Assume that 
\begin{itemize}
\item $ H $ is isometrically embedded into the space $ L^2 ( U, d\mu ) $ for some Borel measure $ \mu $ on $ U $, and 
\[ \int_{I \cap U} K ( t ,t ) d\mu ( t ) < \infty \] for any bounded interval $ I $.

\item for any $ p \in U $ the set $ \mathcal M_p = \{ h \in H : \exists f \in H \colon h = ( \cdot - p ) f \} $ is dense in $ H_p = \{ f\in H \colon f(p) = 0 \} $. 
\end{itemize} 
Then $ \mathcal M_p = H_p $ and thus the assumptions of theorem  \ref{main} are satisfied.
\end{proposition}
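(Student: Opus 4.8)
The plan is to recognize $\mathcal M_p$ as the range of a shifted multiplication operator and to reduce the statement to a closedness assertion. Let $A$ be the operator of multiplication by the variable, defined on $\mathcal D = \{ f \in H : tf \in H\}$ as in the proof of Theorem \ref{main}. If $h = (\cdot - p) f$ with $f \in H$, then $tf = h + pf \in H$, so $f \in \mathcal D$ and $h = (A-p)f$; conversely $(A-p)\mathcal D \subseteq \mathcal M_p$. Hence $\mathcal M_p = \operatorname{Ran}(A-p)$. On the other hand $H_p = \{f : f(p)=0\} = K_p^\perp$ has codimension one by non-degeneracy, and the hypothesis that $H$ contains no nonzero function supported at a single point gives simultaneously $\ker(A-p) = \{0\}$ and the uniqueness half of the division property. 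Since the second bullet says exactly that $\operatorname{Ran}(A-p)$ is dense in $H_p$, everything reduces to showing that $\operatorname{Ran}(A-p)$ is closed: a closed subspace dense in $H_p$ equals $H_p$, which is precisely the division property at $p$, and as $p \in U$ is arbitrary this yields the hypotheses of Theorem \ref{main}.

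To get closedness I would show $p \in \hat\rho(A)$, i.e. $\|(A-p)f\| \ge \epsilon \|f\|$ for some $\epsilon > 0$ and all $f \in \mathcal D$; combined with closedness of $A$ and $\ker(A-p)=\{0\}$ this forces $\operatorname{Ran}(A-p)$ to be closed. Arguing by contradiction, suppose there are $f_n \in \mathcal D$ with $\|f_n\| = 1$ and $(A-p)f_n = (\cdot-p)f_n \to 0$. Using the isometric embedding, $\|(\cdot - p) f_n\|_{L^2(\mu)} \to 0$, so on $U$ outside an interval $I_\delta = (p-\delta, p+\delta)$ the bound $|t-p| \ge \delta$ gives $\int_{U \setminus I_\delta} |f_n|^2\, d\mu \to 0$; since the total $L^2(\mu)$-mass equals $\|f_n\|^2 = 1$, the mass concentrates near $p$, $\int_{I_\delta \cap U} |f_n|^2\, d\mu \to 1$. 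Meanwhile the reproducing kernel property gives $(t-p) f_n(t) \to 0$, hence $f_n(t) \to 0$ for every $t \in U \setminus \{p\}$, and the Schwarz bound $|f_n(t)|^2 \le K(t,t)$ together with \eqref{tra} lets the dominated convergence theorem kill the mass away from $p$, $\int_{(I_\delta \cap U) \setminus \{p\}} |f_n|^2\, d\mu \to 0$.

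The main obstacle is the possibility that $\mu$ has an atom at $p$, for then the previous step only isolates the surviving contribution $|f_n(p)|^2 \mu(\{p\}) \to 1$ and yields no contradiction by itself. To remove it I would note that $f_n \to 0$ weakly: the set $\{K_t : t \in U,\ t \ne p\}$ has dense linear span (its orthogonal complement consists of functions supported at $\{p\}$, hence is trivial), and since $\langle f_n, K_t \rangle = f_n(t) \to 0$ for each such $t$ while $\|f_n\| = 1$ stays bounded, we get $f_n \rightharpoonup 0$. In particular $f_n(p) = \langle f_n, K_p \rangle \to 0$, so $|f_n(p)|^2 \mu(\{p\}) \to 0$, contradicting its convergence to $1$. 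This settles $p \in \hat\rho(A)$ in both the atomless and the atomic cases, proves that $\operatorname{Ran}(A-p) = \mathcal M_p$ is closed, hence equal to $H_p$, and thereby verifies the assumptions of Theorem \ref{main}.
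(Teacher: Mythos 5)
Your proof is correct, and its core is the same contradiction argument as the paper's: a normalized sequence $f_n$ with $(A-p)f_n\to 0$ must vanish pointwise on $U\setminus\{p\}$ by the reproducing kernel property, and the Schwarz bound $|f_n(t)|^2\le K(t,t)$ together with \eqref{tra} kills its $L^2(\mu)$-mass away from $p$ by dominated convergence. You package this as the a priori bound $p\in\hat\rho(A)$, from which closedness of $\operatorname{Ran}(A-p)$ follows and the density hypothesis finishes the job; the paper instead applies the same estimate directly to the given approximating sequence, concludes that it is bounded, extracts a weakly convergent subsequence and identifies its limit pointwise. Your version is marginally stronger, since the uniform lower bound makes the approximating sequence norm-convergent and weak compactness is not needed. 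The one genuinely different ingredient is the treatment of an atom of $\mu$ at $p$: you observe that $\{K_t\colon t\in U,\ t\ne p\}$ has dense linear span (its orthogonal complement consists of functions supported at $\{p\}$, excluded by hypothesis), so $f_n\rightharpoonup 0$ and in particular $f_n(p)=\langle f_n,K_p\rangle\to 0$, killing the atomic contribution; the paper argues instead that $\xi_k-\xi_k(p)\delta_p\to 0$ in $L^2(\mu)$ and derives the contradiction from the dichotomy that either $\delta_p\in H$ or $\xi_k\to 0$. Both work; yours is arguably cleaner in that it never needs to discuss $\delta_p$ as a potential element of $H$, and it handles the atomless and atomic cases uniformly.
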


\begin{proof}
Let $ h \in H_p $. We have to show that there exists an $ f \in H $ such that $ h = ( \cdot - p ) f $. By the density assumption, there exists a sequence $ f_n \in \mathcal D $ such that $ ( A - p ) f_n \longrightarrow h $. Obviously, the sequence $ f_n $ converges in $ L^2 $ with respect to the measure $ \mu $ in a complement to any open vicinity of $ p $.

Let us show that the sequence $ f_n $ is bounded in $ H$. Assume by contradiction that $ \| f_{ n_k } \| \longrightarrow \infty $ for some subsequence $ n_k $. Let $ \xi_k = f_{n_k}  / \| f_{n_k } \| $. Then $ ( A - p ) \xi_k \longrightarrow 0 $. Therefore $ \xi_k ( t ) \longrightarrow 0 $ for any $ t \in U $, $ t\ne p $. On the other hand $ \| \xi_k \| = 1 $, and so $ \left| \xi_k ( t ) \right|^2 \le K( t,t ) $ for any $ t \in U $. By the dominated convergence and the isometric embedding into $ L^2 ( U , d\mu ) $, we obtain that $ \xi_k \longrightarrow 0 $ in $ H $, provided that $ \mu \{ p \} = 0  $, a contradiction. If $ \mu \{ p \} \ne 0 $ then $ \xi_k - \xi_k ( p ) \delta_p \longrightarrow 0 $, hence either $ \delta_p \in H $ which contradicts the assumptions, or $ \xi_k \longrightarrow 0 $, again a contradiction. Thus, $ \sup_n \| f_n \| < \infty $. 

Fix a weakly convergent subsequence, $ f_{ n_k } $. Let $ f $ be its limit.
By the reproducing kernel property this subsequence converges pointwise on $ U $. In particular, $ ( t-p ) f_{ n_k } ( t) \longrightarrow (t-p)f(t) $ for all $ t \in U $, and therefore $ h ( t ) = ( t-p) f( t ) $.
\end{proof}

\section{Examples}

$1^\circ$. Let $ H $ be the de Branges space corresponding to an Hermite--Biehler function $ E $ without real zeroes. The assertion of theorem \ref{main} is trivial then, but let us follow the construction. In this case $ \xi = K_w $, $ K_w $ being the reproducing kernel of the space $ H $ at the point $ w $, $ S $ is the set of zeroes of the function $ K_w $, $ f_\xi = f/K_w $, $ R $ is the canonical product over the zeroes of $ K_w $, and thus $ R f_\xi = T f $ where $ T $ is an entire function without zeroes. The reproducing kernel of the space $ \mathcal X $ at a point $ y \in \C $ is $ K_y^{ \mathcal X } = \overline{ T(y) } K_y T $, hence $ \Omega = \sqrt{ T^* / T } $ and thus the isomorphism $ \Phi $ is given by the multiplication on the zero-free entire function $ \left( T^* T \right)^{ -1/2 } $. In the case of the Paley--Wiener space this entire function is constant, for $ K_w $ coincides with its canonical product.

This example includes the sine- and Airy- processes \cite{Dyson,TW-Airy}.

$ 2^\circ $. (discrete sine process \cite{BOO,johansson}). Fix an arbitrary $ b \in (0, \pi/2 ) $, and let $ U= \mathbb Z $, $ K ( m , n ) = \frac{ \sin( b ( m-n ))}{\pi (m-n)} $, $ m , n \in \mathbb Z $. This kernel defines a projection in $ l^2 ( \mathbb Z ) $ on a subspace $ H $ corresponding to a quasi-invariant determinantal process \cite{BOO}. Let $ W : l^2 ( \mathbb Z ) \to L^2 ( -\pi , \pi ) $ be defined by $ W a = \sum a_n e^{ inx } $, $ a = \{ a_n \} $. It is clear that $ W H $ coincides with the subspace of functions supported on $ [-b,b ] $, that $ W \mathcal D = \{ u \in H^1 ( -\pi , \pi ) : u ( x ) = 0 \textrm{ for a.e. } x , |x| > b \} $ where $ H^1 $ stands for the Sobolev class, and that $ W A W^{-1 } $ is the operator $ -i d/dx $. For any $ z \in \C $ the subspace $ W \operatorname{Ran} ( A - z )^\perp $ is spanned by the function $ e^{ i \overline z x } \chi_{[-b, b ] } $. 
In particular the element $ \xi $ can be chosen to be the sequence of the Fourier coefficients of the function $ e^{ i \overline w x } \chi_{[-b, b ] } $. Then $ W\varphi ( z ) = c( z) e^{ izx} $, where $ c $ is an analytic function off a discrete set on the real axis, whose exact form is not required since it cancels out in calculations of $ f_\xi $ via formula \eqref{fxi}. We have,
\begin{align*} f_\xi ( \lambda ) =  \frac{\left\langle f , \varphi ( \overline \lambda ) \right\rangle }{\langle \xi , \varphi ( \overline \lambda ) \rangle } = \frac{\left\langle Wf , W\varphi ( \overline \lambda ) \right\rangle }{\langle W\xi , W\varphi ( \overline \lambda ) \rangle } = \frac{\left\langle Wf , e^{ i \overline \lambda x } \right\rangle }{\left\langle e^{ i \overline w x } , e^{ i \overline \lambda x } \right\rangle_{ L^2 ( -b,b)} } = \\ \frac{ \overline w - \lambda }{ 2\sin ( b (\overline w - \lambda )) } \sqrt{ 2\pi } ( \widehat{Wf}) (\lambda ) . \end{align*}
Here the function $ Wf $ is supposed to be extended by zero to the whole real line, and 
$ \; \widehat{} \; \; $ stands for the standard Fourier transform in $ L^2 ( \R ) $.
 
The set $ S $ is thus the set of $ \lambda \ne \overline w $ such that $ \sin (b( \overline w - \lambda )) = 0$. Since the function $ \frac{\sin z}z $ coincides with its canonical product, in the situation under consideration  
\[ R f_\xi = \frac 1b \sqrt{ \frac\pi{2} } ( \widehat{Wf}) (\lambda ). \]
The resulting space $ \mathcal X $ coincides with the Paley--Wiener space $PW_b $ as a set, and the norm in $ \mathcal X $ is a multiple of the norm in $ PW_b $. Since $ \mathcal X $ is already a de Branges space there is no need to use the $ \Omega $-symmetrization here. 

Notice that the same de Branges space -- the Paley-Wiener space -- arises in the two previous examples from two different determinantal processes (sine and discrete sine). This does not contradict uniqueness, for the respective sets $ U $ are different.

$ 3^\circ $. (Bessel process \cite{TW-Bessel}). Let $ U = \R_+ $, 
\[ K ( x , y ) = \frac{\sqrt x J_{s+1} ( \sqrt x ) J_s ( \sqrt y ) - \sqrt y J_{s+1} ( \sqrt y ) J_s ( \sqrt x )}{2(x-y)} ,\]
with $ H = \operatorname{Ran} K $ in $ L^2 ( \R_+ ) $. Here $ s > -1 $ is a parameter, and $ J_s $ stands for the Bessel function. In this case $ H $ is a reproducing kernel Hilbert space of analytic functions on $ \C\setminus \{ t, t\le 0 \} $ such that $ x^{ -s/2 } f $ is an entire function for all $ f \in H $. In particular, $ f_\xi = f/ K_w $, where $ K_w $ is the reproducing kernel of $ H $ at the point $ w $. The function $ K_w (x) x^{ -s/2 } $ is an entire function of order $ 1/2 $, because such is the function $ J_s ( \sqrt x ) x^{ -s/2} $, and therefore it coincides with its canonical product, hence $ R f_\xi =  x^{ -s/2}f R / ( x^{ -s/2 }K_w ) = f x^{ -s/2 } $. The reproducing kernel of the space $ \mathcal X $ at $ y\in \R $ has the form
\[ K_y^{ \mathcal X }(t ) = y^{ -s/2 } K_y (t) t^{ -s/2 } .\] The kernel is real, the $ \Omega $-symmetrization is not required, so $ \mathcal X $ is the de Branges space $ \mathcal H ( E ) $ with 
\[ E ( t ) = \frac\pi{\sqrt 2} \bigl( t j_{s+1} (\sqrt t ) + i j_s (\sqrt t ) \bigr) ,\;  j_s (t ) := J_s(t) t^{ -s} . \]

\section{Acknowledgements}
The author learnt about the problem studied in this paper from  Alexander Bufetov. This and many helpful discussions are gratefully acknowledged. The author thanks the unknown referees for their suggestions improving the presentation. The work was supported by the Russian Science Foundation under grant no. 22–11–00071.

\section{Conflict of interest}

The author declares that he has no conflict of interest.

\end{document}